\theoremstyle{plain}
\newtheorem{thm}{Theorem}[section]
\newtheorem{prop}[thm]{Proposition}
\newtheorem{lem}[thm]{Lemma}
\newtheorem{cor}[thm]{Corollary}
\theoremstyle{definition}\newtheorem{defn}[thm]{Definition}
\newtheorem{example}[thm]{Example}
\newtheorem{note}[thm]{Notation}
\numberwithin{equation}{section}
\renewcommand{\theta}{\vartheta}
\renewcommand{\phi}{\varphi}
\renewcommand{\epsilon}{\varepsilon}
\renewcommand{\subset}{\subseteq}
\newcommand{\mc}[1]{\mathcal{#1}}
\newcommand{\mf}[1]{\mathfrak{#1}}
\newcommand{\mr}[1]{\mathrm{#1}}
\newcommand{\mbb}[1]{\mathbb{#1}}
\newcommand{\N}{\mathbb N}
\newcommand{\R}{\mathbb R}
\newcommand{\C}{\mathbb C}
\newcommand{\K}{\mathbb K}
\newcommand{\NC}{\mr{NC}}
\newcommand{\mcR}{\mathcal R}
\newcommand{\hilb}{\mc{H}}
\DeclareMathOperator*{\id}{id}
\DeclareMathOperator*{\Real}{Re}
\DeclareMathOperator*{\Image}{Im}
\newcommand{\eps}{\epsilon}
\newcommand{\ccop}[2]{G_{#1}^{#2}}
\newcommand{\HP}{\mathbb{H}^+}
\newcommand{\HM}{\mathbb{H}^-}
\newcommand{\SPN}{\mathrm{SPN}}
\newcommand{\CW}{\mathrm{CW}}
\newcommand{\sa}{\mathrm{s.a.}}
\newcommand{\cptprob}{\mc{B}_{c}(\R)} 
\newcommand{\D}{\mf{D}_2}
\newcommand{\muSPNB}[2]{{\mu_\SPN^\Box(#1, #2)}}
\begin{document}

\newboolean{DVIPDF}
\setboolean{DVIPDF}{false}

\mathtoolsset{showonlyrefs=true}


\newboolean{PHD}
\setboolean{PHD}{false}
\newcommand{\PHD}{\boolean{PHD}}

\title[Identifiability of PRM]{
Identifiability of Parametric Random Matrix Models
}
\author{Tomohiro Hayase}
\address{Graduate School of Mathematical Sciences, University of Tokyo, 3-8-1 Komaba, Meguro-ku, Tokyo, 153-8914, Japan }
\keywords{Identifiability, Random Matrix Theory, Free Probability Theory, Statistical Models}
\email{\href{mailto:}{hayase@ms.u-tokyo.ac.jp}}
\date{\today}

\begin{abstract}
We investigate parameter identifiability of spectral distributions of random matrices.
In particular, we treat compound Wishart type and  signal-plus-noise type. 
We show that each model is identifiable up to some kind of rotation of parameter space.
Our method is based on free probability theory.
\end{abstract}

\maketitle

\tableofcontents
\section{Introduction}

Identifiability analysis is fundamental in a theoretical understanding of statistical models, for example,  log-likelihood maximization.
A parametric statistical model  $(P_\theta)_{\theta \in \Theta}$, a parametric family  of probability measures,  is said to be \emph{identifiable} if the map $\theta \mapsto P_\theta$ is injective. 
For a statistical model, its identifiability is necessary for its regularity.
Under regularity condition, then maximal likelihood estimator has a good behavior such as asymptotic normality.
In general, a geometry of log-likelihood is determined by the Fisher information matrix (see  \cite{amari2016information}), which is  expected  Hessian of log-likelihood with respect to parameters.
If a statistical model is non-identifiable, then  the Fisher information matrix is singular, and the eigenspace for the zero eigenvalue is determined by non-identifiable parameters.
Therefore, determining non-identifiable parameters is important in   non-identifiable models.


In this paper, we investigate identifiability of statistical models introduced for parameter estimation of random matrix models. 
In \cite{hayase2018cauchy}, two typical random matrix models,  the compound Wishart model $W_\CW$ and the signal-plus-noise model  $W_\SPN$ are treated.
They are defined as the following: 
\begin{align}
W_\CW(D) &=  Z^*D Z,\\
W_\SPN(A, \sigma) &=  ( A + \sigma Z)^* (A + \sigma Z),
\end{align}
where $Z$ is $p \times d$  matrix of independent and identically distributed Gaussian random variables with mean zero and variance $1/d$,
$D$ and $A$ are deterministic matrices, and $\sigma \in \R$.
For any self-adjoint matrix $W$, 
let us denote by $\mu_W$ the eigenvalue distribution defined as 
\begin{align}
\mu_W=\frac{1}{d}\sum_{k=1}^{d} \delta_{\lambda_k},
\end{align}
where $\lambda_1 \leq \lambda_2 \dots \leq \lambda_d$ are eigenvalues of $W$.
The parameter estimation method introduced in \cite{hayase2018cauchy} is minimizing modified KL-divergence between a statistical model 
\begin{align}
&\mu^\Box_\CW(D), \ D \in M_{p}(\C)\\
&(\text{resp.\,}  \muSPNB{A}{\sigma}, \  A \in M_{p,d}(\C),\  \sigma \in \R )
\end{align}
and a sample of the empirical eigenvalue distribution $\mu_{W_\CW(D_0)}$ (resp.\,$\mu_{W_\SPN(A_0, \sigma_0)}$), where true parameters $D_0, A_0$, $\sigma_0$ are unknown. The definition of the statistical models $\mu^\Box_x$ is based on \emph{free deterministic equivalent}. 
The free deterministic equivalent is introduced by \cite{speicher2012free}, which is a deterministic and infinite-dimensional approximation of random matrices  based on a central limit theorem of  the eigenvalue distribution.

It directly follows from the definition of  $\mu_x^\Box$ that 
\begin{align}
\mu_D = \mu_{D^\prime} %
&\Longrightarrow   %
\mu^\Box_\CW(D) = \mu^\Box_\CW(D^\prime),\\
\mu_{A} = \mu_{A^\prime}, \ \sigma^2 = \sigma^{^\prime 2} %
&\Longrightarrow %
\mu^\Box_\SPN(A,\sigma) = \mu^\Box_\SPN(A^\prime,\sigma^\prime).
\end{align}
In particular, these statistical models are not identifiable.
For the CW model, it is easy to show that the converse also holds:
\begin{align}
\mu_D = \mu_{D^\prime} %
&\iff %
\mu^\Box_\CW(D) = \mu^\Box_\CW(D^\prime).
\end{align}
In other words, if we replace the parameter set by the set of eigenvalue distributions then this model becomes identifiable.
Note that there is a bijection between the set of eigenvalue distributions 
and 
\begin{align}
\{ v \in \R^p \mid v_k = v_{\pi(k)}, \forall k= 1, \dots, p, \ \forall \pi \in S_p \},
\end{align}
where $S_p$ is the permutation group of $p$ elements.
However, it is not clear that the converse holds for the SPN model.


The main theorem of this paper is as follows.
\begin{thm}
    Let $p, d \in \N$ with $p \geq d$. For $A,B \in M_{p,d}(\C)$ and $\sigma, \rho \in \R$, the following holds:
    \begin{align}
    \mu^\Box_\SPN(A,\sigma) = \mu^\Box_\SPN(B,\rho)  \iff
    \begin{cases}
    \mu_{A^*A} = \mu_{B^*B}, \\
    \sigma^2 = \rho^2.
    \end{cases}  
    \end{align}
\end{thm}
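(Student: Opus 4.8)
The plan is to recast the statement in the language of rectangular free convolution and then to exploit that $\mu_{A^*A}$ and $\mu_{B^*B}$ are \emph{finitely supported} (each being the spectral measure of a $d\times d$ matrix, hence a sum of at most $d$ atoms). The implication ``$\Leftarrow$'' is the one already recorded before the theorem: in the free deterministic equivalent, $(A+\sigma Z)^*(A+\sigma Z)$ is replaced by $(a+\sigma c)^*(a+\sigma c)$, where $a=qa\tilde q$ lives in an operator-valued probability space with range/source projections $q,\tilde q$ and $c$ is a rectangular circular element $*$-free from $a$ amalgamated over $\mathrm{span}\{q,\tilde q\}$; the $*$-distribution of such an $a$ is determined by $\mu_{A^*A}$ together with the fixed dimensions $p,d$, and that of $\sigma c$ by $\sigma^2$, so $\mu^\Box_\SPN(A,\sigma)$ depends only on $(\mu_{A^*A},\sigma^2)$. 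The work is in ``$\Rightarrow$''.

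For ``$\Rightarrow$'' I would first identify $\mu^\Box_\SPN(A,\sigma)$ as the push-forward under $t\mapsto t^2$ of the singular-value distribution of $a+\sigma c$, which by the theory of rectangular free convolution equals $\mu^{\mathrm{sv}}_{A}\boxplus_\lambda\mu^{\mathrm{sv}}_{\sigma c}$ at the aspect ratio $\lambda$ fixed by $p,d$, with $\mu^{\mathrm{sv}}_{\sigma c}$ the $\sigma$-dilation of the singular-value distribution of the rectangular circular element. Let $\mathcal{C}_\mu$ be the rectangular $R$-transform, which linearizes $\boxplus_\lambda$; regarding it (through the push-forward $t\mapsto t^2$) as a transform of the law of $a^*a$, the two previous sentences give
\[
\mathcal{C}_{\mu^\Box_\SPN(A,\sigma)}(z)=\mathcal{C}_{\mu_{A^*A}}(z)+\mathcal{C}_{\mu^{\mathrm{sv}}_{\sigma c}}(z).
\]
Since the rectangular circular element is the $\boxplus_\lambda$-analogue of the classical Gaussian, its $\mathcal{C}$-transform is linear, and the dilation rule $\mathcal{C}_{D_\sigma\mu}(z)=\mathcal{C}_\mu(\sigma^2 z)$ (with $D_\sigma$ dilation by $\sigma$) then gives $\mathcal{C}_{\mu^{\mathrm{sv}}_{\sigma c}}(z)=\kappa\,\sigma^2 z$ for a positive constant $\kappa=\kappa(p,d)$.

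Now assume $\mu^\Box_\SPN(A,\sigma)=\mu^\Box_\SPN(B,\rho)$. Equating rectangular $R$-transforms gives $\mathcal{C}_{\mu_{A^*A}}(z)-\mathcal{C}_{\mu_{B^*B}}(z)=\kappa(\rho^2-\sigma^2)z$. Suppose for contradiction that $\sigma^2\neq\rho^2$, say $\rho^2>\sigma^2$, and put $t=\rho^2-\sigma^2>0$. Then $\mathcal{C}_{\mu_{A^*A}}=\mathcal{C}_{\mu_{B^*B}}+\kappa t\,z=\mathcal{C}_{\mu_{B^*B}\boxplus_\lambda\nu_t}$, where $\nu_t$ is the corresponding nontrivial element of the rectangular-Gaussian convolution semigroup; hence $\mu_{A^*A}=\mu_{B^*B}\boxplus_\lambda\nu_t$. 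By the regularizing property of rectangular free convolution with the rectangular Gaussian (the $\boxplus_\lambda$-analogue of Biane's theorem on free convolution with a semicircular element), the right-hand side is Lebesgue-absolutely continuous on its support and hence has infinite support, whereas $\mu_{A^*A}=\frac1d\sum_{k=1}^d\delta_{s_k^2}$ sits on at most $d$ points. This contradiction forces $\sigma^2=\rho^2$; then $\mathcal{C}_{\mu_{A^*A}}=\mathcal{C}_{\mu_{B^*B}}$, so $\mu_{A^*A}=\mu_{B^*B}$.

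The hard part is not this last argument but the preliminary identifications: making precise the dictionary between the free deterministic equivalent $\mu^\Box_\SPN(A,\sigma)$ and $\mu^{\mathrm{sv}}_{A}\boxplus_\lambda\mu^{\mathrm{sv}}_{\sigma c}$ (the correct $\lambda$, the constant $\kappa$ coming from the operator-valued covariance of $\sigma c$, and the dilation identity for $\mathcal{C}$), and quoting a workable form of the regularization statement invoked at the end. A self-contained alternative, avoiding rectangular free convolution, runs the same mechanism through the operator-valued subordination equation for $\hat a+\sigma\hat c$ over $\mathrm{span}\{q,\tilde q\}$ with $\hat a=a+a^*$: restricting that $2\times2$ system to the diagonal collapses it to a scalar identity of the shape $g_{\mu_{A^*A}}(\zeta(w))=\Gamma(w)/(w-\sigma^2\Gamma(w))$, where $\Gamma$ is the given Cauchy transform of $\mu^\Box_\SPN(A,\sigma)$ and $\zeta(w)$ is explicit in $w$, $\Gamma(w)$, $\sigma^2$; the crux is again that $g_{\mu_{A^*A}}$ is rational with at most $d$ poles, which rigidifies the identity enough to recover first $\sigma^2$ and then $\mu_{A^*A}$.
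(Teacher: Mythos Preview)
Your outline is correct and strategically parallel to the paper's proof, but the technical machinery differs. Both arguments proceed in two steps: (i) linearize via an $R$-transform variant so that the contribution of the noise separates as a scalar shift, allowing one to subtract the smaller noise level; (ii) rule out the residual equality ``atomic $=$ atomic $\boxplus$ noise'' by a regularity/atomicity obstruction.

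For step (i), the paper does not use Benaych-Georges' rectangular $R$-transform but rather Ryan--Debbah's free multiplicative deconvolution: it establishes
\[
M[\mu^\Box_\SPN(A,\sigma)]\ \boxdiag\ f_\lambda \;=\; \bigl(M[A^*A]\ \boxdiag\ f_\lambda\bigr)\ \boxplus\ M[\delta_{\sigma^2/\lambda}],
\]
which is exactly your identity $\mathcal{C}_{\mu^\Box_\SPN(A,\sigma)}=\mathcal{C}_{\mu_{A^*A}}+\kappa\sigma^2 z$ written in a different dialect (here $\kappa=\lambda^{-1}$ and $\mathcal{C}_\mu=R_{M[\mu]\boxdiag f_\lambda}$). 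So the combinatorial reductions are equivalent; only the packaging differs.

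For step (ii), the paper does \emph{not} invoke a black-box regularization theorem. After subtracting noise it is left with $\mu^\Box_\SPN(A,\sqrt{\sigma^2-\rho^2})=\mu_{B^*B}$ and proves directly that this forces $\sigma^2=\rho^2$: it passes to the $\C^2$-valued Cauchy transform of the Hermitized $\Lambda(A+\sigma'C)$, uses the Belinschi--Mai--Speicher subordination identity to write $G_{\Lambda(A+\sigma'C)}(z)=G_{\Lambda(A)}\bigl(\sigma'^2\eta(G_{\Lambda(A+\sigma'C)}(z))+z\bigr)$, and then computes an explicit radial limit at the top singular value $\beta=\|B^*B\|^{1/2}$ to derive a contradiction when $\sigma'\neq 0$. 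This is precisely the ``self-contained alternative'' you sketch in your last paragraph; in the paper it is the primary route, not a fallback.

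What each approach buys: your rectangular-$R$-transform route is conceptually cleaner and shorter \emph{provided} you can cite a rectangular analogue of Biane's absolute-continuity theorem for $\boxplus_\lambda$ with the rectangular Gaussian (or, equivalently, absolute continuity of the information-plus-noise law for $\sigma\neq 0$, for which Dozier--Silverstein type results are available). The paper's route trades that external dependency for a hands-on pole computation, making the proof self-contained modulo the subordination machinery. Your honest flagging of the ``preliminary identifications'' and the regularization citation as the real work is accurate; once those are nailed down your argument goes through.
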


In particular, if we replace the parameter space by the direct product of 
singular value distribution and the nonzero real numbers, 
then this statistical model becomes identifiable.
Note that there is a bijective between the direct product and 
\begin{align}
\{ v \in \R^d \mid v_{\pi(k)} = v_k \geq 0 \ \forall k =1, \dots, d, \forall \pi \in S_d \}  \times  \{ v \in \R \mid v \geq 0 \}.
\end{align}
Our proof  consists of  
an analytic part based on operator-valued analytic free additive subordination \cite{belinschi2013analytic} and a combinatorial part based on free multiplicative deconvolution \cite{ryan2007free, ryan2multiplicative}.

\section{Related Works}
The compound  Wishart random matrix was introduced by  \cite{speicher1998combinatorial}.
It appears as sample covariance matrices of correlated samplings
\cite{burda2011applying, couillet2011deterministic, hasegawa2013random}.
The signal-plus-noise random matrix appears in  signal precessing \cite{ryan2007free, hachem2012large, vallet2012improved}. 

Free probability is invented by Voiculescu \cite{voiculescu1985symmetries}.
In free probability theory, motivated by solving a problem in operator algebras, some infinite-dimensional operators are described as  infinite-dimensional limit of random matrices.
The approximation is  based on a central limit theorem, which is called the \emph{free central limit theorem}, of eigenvalue distribution of random matrices  \cite{voiculescu1991limit}.
Conversely, the purpose of free deterministic equivalent is to approximate fixed-size but large random matrix models by deterministic operators.

For analysis of non-identifiable models, generic identifiability was introduced in \cite{allman2009identifiability}.

\section{Preliminary}
\subsection{Freeness}\label{ssec:freeness}\hfill

First, we summarize some definitions from operator algebras and free probability theory. See \cite{Mingo2017free} for the detail.
\begin{defn}\hfill
	\begin{enumerate}
		\item A \emph{C$^*$-probability space}  is a pair $(\mf{A}, \tau)$  satisfying followings.
		\begin{enumerate}
			\item The set $\mf{A}$ is \emph{a unital $C^*$-algebra}, that is, a possibly non-commutative subalgebra of the algebra  $B(\hilb)$ of bounded $\C$-linear operators on a Hilbert space $\hilb$ over $\C$ satisfying the following conditions:
			\begin{enumerate}
				\item it is stable under the adjoint $* : a \to a^*, a \in \mf{A}$,
				\item it is closed under  the topology of the operator norm of $B(\hilb)$,
				\item it contains the identity operator $\id_\hilb$  as the unit $1_\mf{A}$ of $\mf{A}$.
			\end{enumerate}
			\item The function $\tau$ on  $\mf{A}$ is a \emph{faithful tracial state}, that, is  a $\C$-valued  linear functional with
			\begin{enumerate}
				\item $\tau(a)\geq 0$ for any $a \geq 0$, and the equality holds if and only if $a=0$,
				\item $\tau(1_\mf{A})=1$,
				\item $\tau(ab)=\tau(ba)$ for  any $a, b \in \mf{A}$.
			\end{enumerate}
		\end{enumerate}
		\item A  subalgebra $\mf{B}$ of a C$^*$-algebra $\mf{A}$ is called a \emph{$*$-subalgebra} if it is stable under the adjoint operator $*$. Moreover, it is called a \emph{unital C$^*$-subalgebra} if the $*$-subalgebra is closed under the operator norm topology and contains $1_\mf{A}$ as its unit.
		\item Two unital $C^*$-algebras are called \emph{$*$-isomorphic} if there is a bijective linear map between them which preserves the $*$-operation and the multiplication.
		\item Let us denote by $\mf{A}_\sa$ the set of \emph{self-adjoint} elements, that is, $a = a^*$ of $\mf{A}$.
		\item Write $\Real a := (a + a^*)/2$ and $\Image a := ( a- a^*)/{2i}$ for any $a \in \mf{A}$.
		\item The \emph{distribution} of  $a \in \mf{A}_\sa$  is the probability measure $\mu_a \in \cptprob$ determined by
		\[
		\int x^k \mu_a(dx) = \tau(a^k),\  k \in \N.
		\]
		\item For $a \in \mf{A}_\sa$, we define its Cauchy transform $G_a$ by $G_a(z):= \tau[ (z -a )^{-1} ] \ (z \in \C \setminus \R)$, equivalently, $G_a:=G_{\mu_a}$.
		
	\end{enumerate}
	
\end{defn}

\begin{defn}
	A family of $*$-subalgebras $(\mf{A}_j)_{j \in J}$ of $\mf{A}$ is said to be \emph{free}
	if the following factorization rule holds: for any $n \in \N$ and indexes $j_1, j_2, \dots, j_n \in J $ with $j_1 \neq j_2 \neq j_3  \neq \cdots \neq j_n$, and $a_l \in \mf{A}_l$ with $\tau(a_l)=0$ $(l = 1 ,\dots, n)$, it holds that
	\[
	\tau(a_1 \cdots a_l) = 0.
	\]
	Let  $(x_j)_{j \in J}$ be a family of self-adjoint elements $x_j \in \mf{A}_\sa$. For $j \in J$, let $\mf{A}_j$ be the $*$-subalgebra of polynomials of $x_j$.
	Then $(x_j)_{j \in J}$ is said to be  free if $\mf{A}_j$  is free.
\end{defn}

We introduce special elements in a non-commutative probability space.
\begin{defn}
	Let $(\mf{A}, \tau)$  be a C$^*$-probability space.
	\begin{enumerate}
		\item
		An element $s \in \mf{A}_\sa$ is called \emph{standard semicircular} if its distribution is given by the standard semicircular law;
		\[
		\mu_s(dx) = \frac{\sqrt{4-x^2}}{2\pi}{\bf 1}_{[-2,2]}(x)dx,
		\]
		where ${\bf 1}_S$ is the indicator function for any subset $S \subset \R$.
		\item Let $v > 0$.    An element $ c \in \mf{A}$ is called \emph{circular of variance  $v$} if
		\[c=\sqrt{v}\frac{s_1+is_2}{\sqrt{2}},\]
		where $(s_1,s_2) $ is a pair of free standard semicircular elements.
        In addition. $c$ is called \emph{standard circular element} if $v=1$. 
		\item A \emph{$*$-free circular family} (resp.\,standard $*$-free circular family) is a family $\{ c_j \mid j \in J \}$ of circular elements $c_j \in \mf{A}$ such that $\bigcup_{j\in J}\{ \Real c_j , \Image c_j \}$ is free (resp. and each elements is of variance $1$).
	\end{enumerate}

\end{defn}

\bigskip
\begin{defn}
	Let $(\mf{A}, \tau)$ be a C$^*$-probability space and $\mf{B}$ be a unital  C$^*$-subalgebra  of $\mf{A}$. Recall that they share the unit: $I_\mf{A} = I_\mf{B}$.
	\begin{enumerate}
		\item
		Then a linear operator $E \colon \mf{A} \to \mf{B}$ is called a
		\emph{conditional expectation onto $\mf{B}$} if it satisfies following conditions;
		\begin{enumerate}
			\item $E[b] = b$ for any $b \in \mf{B}$,
			\item $E[b_1 a b_2] = b_1E[a]b_2$ for any $a \in \mf{A}$ and $b_1, b_2 \in \mf{B}$,
			\item $E[a^*] = E[a]^*$ for any $a \in \mf{A}$.
		\end{enumerate}
		
		\item
		We write $\HP(\mf{B}) := \{ W \in \mf{B} \mid \text{ there is $\eps > 0$ such that $\Im W \geq \eps I_\mf{A}$} \}$ and $\HM(\mf{B}) := - \HP(\mf{B})$.
		\item
		Let $E \colon \mf{A} \to \mf{B}$ be a conditional expectation.
		For $a \in \mf{A}_\sa$, we define a \emph{$E$-Cauchy transform} as the map $\ccop{a}{E} \colon \HP(\mf{B}) \to \HM(\mf{B})$, where
		\[
		\ccop{a}{E}(Z) := E[ (Z - a)^{-1}],  \ Z \in \HP(\mf{B}).
		\]
		If there is no confusion, we also call $E$ a $\mf{B}$-valued Cauchy transform.
	\end{enumerate}

\end{defn}

\begin{defn}(Operator-valued Freeness)
	Let $(\mf{A}, \tau)$ be a C$^*$-probability space,  and $E :  \mf{A} \to \mf{B}$ be a conditional expectation.
	Let  $(\mf{B}_j)_{j \in J}$ be a family of $*$-subalgebras of $\mf{A}$ such that $\mf{B} \subset \mf{B}_j$.
	Then $(\mf{B}_j)_{j \in J}$  is said to be \emph{$E$-free}
	if the following factorization rule holds: for any $n \in \N$ and indexes $j_1, j_2, \dots, j_n \in J $ with $j_1 \neq j_2 \neq j_3  \neq \cdots \neq j_n$, and $a_l \in \mf{B}_l$ with $E(a_l)=0$ $(l = 1 ,\dots, n)$, it holds that
	\[
	E(a_1 \cdots a_l) = 0.
	\]
	In addition, a family of elements $X_j \in \mf{A}_\sa \ ( j \in J)$ is called $E$-free if the family of $*$-subalgebra of the $\mf{B}$-coefficient polynomials of $X_j$ is $E$-free.
\end{defn}

\subsection{Random Matrix Models and Free Deterministic Equivalents}

\begin{defn}
    Fix a probability measure space $(\Omega, \mf{F}, \mbb{P})$. 
    Write $\mbb{E}[\cdot] = \int \ \cdot \ \mbb{P}(d\omega)$.
    Let $p, d \in \N$.
    Then real (resp.\,complex) $p \times d$ Ginibre random matrix of variance $v>0$  is  defined as $p \times d$ matrix of independent and  identically distributed  real (resp.\,complex) Gaussian  random variables  $Z_{ij}$ $(i=1,\dots, p, j=1, \dots,d )$ such that 
    \begin{align}
    \mbb{E}[Z_{ij}] = 0,  	 \mbb{E}[\bar{Z}_{ij}Z_{ij}] = v.
    \end{align}
\end{defn}

\begin{defn}
    Let $\K = \R $ (resp.\,$\K=\C$).
	Let us denote by $Z$ the real (resp.\,complex) $p \times d$ Ginibre random matrix of variance $1/d$.
	\begin{enumerate}
	\item A real (resp.\,complex) \emph{compound Wishart model} ( CW model for short) of type $(p,d)$ is defined as a 
	parametric family $W_\CW,$ where
	\begin{align}
	 W_\CW (D) := Z^*DZ, \  D \in M_{p}(\K).
	\end{align} 
	\item 
	A real (resp.\,complex)  \emph{signal-plus-noise model} (SPN model for short ) of type $(p,d)$ is defined as a parametric family $W_\SPN$, where
	\begin{align}
	W_\SPN(A,\sigma):=(A+\sigma Z)^*(A+\sigma Z),%
	\ A \in M_{p,d}(\K), \ \sigma \in \R.
	\end{align}
	\end{enumerate}
\end{defn}

Here we introduce free deterministic equivalent of each random matrix model. Note that the free deterministic equivalent does not depend on the choice of the field $\R$ or $\C$.

\begin{defn}
	Let $p,d \in \N$. Fix a C$^*$-probability space $(\mf{A}, \tau)$. Let us denote by $C$ the $p \times d$ matrix of $*$-free circular elements in $(\mf{A}, \tau)$ so that
	\begin{align}
	\tau(C_{ij})=0, \ \tau(C_{ij}^* C_{ij}) = 1/d.
	\end{align}
	
	\begin{enumerate}
		\item The free deterministic equivalent of CW model (FDECW model, for short) of type $(p,d)$ is defined as a parametric family $W^\Box_\CW$, where 
		\begin{align}
		W^\Box_\CW(D) = C^*DC,  \  D \in M_p(\C).
		\end{align}
		In addition, we denote by $\mu_\CW(D)$ the distribution of $W^\Box_\CW(D)$
		in the C$^*$-probability space $(M_d(\mf{A}), \tr_d \otimes \tau)$:
		\begin{align}
		\mu^\Box_\CW(D) = \mu_{W^\Box_\CW(D)}.
		\end{align}
		
		\item The free deterministic equivalent of SPN model (FDESPN model, for short) of type $(p,d)$ is defined as a parametric family $W^\Box_\SPN$, where 
		\begin{align}
		W^\Box_\SPN(A, \sigma)=(A + \sigma C)^* (A + \sigma C),
		\ A \in M_{p,d}(\C), \ \sigma \in \R.
		\end{align}
		In addition we denote by $\mu_\SPN(A, \sigma)$ the distribution of $W^\Box_\SPN(A, \sigma)$ in the C$^*$-probability space $(M_d(\mf{A}), \tr_d \otimes \tau)$, that is,
		\begin{align}
		\mu^\Box_\SPN(A, \sigma) = \mu_{W^\Box_\SPN(A, \sigma)}.
		\end{align}
	\end{enumerate}	
\end{defn}

\newcommand{\MP}[1]{\mu_{\mathrm{MP}(#1)}}
\newcommand{\boxdiag}{\ \frame{$\smallsetminus$}\;}

\section{Identifiability}

\ifthenelse{\PHD}{Write Intro of Identifiability}{}

\subsection{Identifiability of CW Model}\hfill

First, we quickly check the identifiability of the CW model.
Fix $p, d \in \N$. Let $D, D^\prime \in M_p(\C)]_\sa$ and $v=(v_1 \leq v_2 \leq \dots v_p), v^\prime=(v^\prime_1 \dots v^\prime_d) \in \R^p$ be the vectors of  eigenvalues of $D, D^\prime$ respectively.
Assume that 
\begin{align}
\mu_\CW^\Box(D) = \mu_\CW^\Box(D^\prime). \label{align:assump-cw}
\end{align}
Now since $\mu_\CW^\Box(D)$ is a compound free Poisson law ( see \cite{nica2006lectures}), the $\mcR$-transform of $\mu_\CW^\Box(D)$ is given by the following.
\begin{align}
\mcR(b,v) =  \frac{1}{d}\sum_{k=1}^{p} \frac{v_k}{1- v_k b}, \ b \in \HM(\C).
\end{align}
By the assumption \eqref{align:assump-cw}, it holds that 
\begin{align}
\mcR(b, v ) = \mcR( b, v^\prime),  b \in \HM(\C).
\end{align}
Since all polos of $\mcR(\cdot, v)$ are  order one,  $v$ and $v^\prime$ are equal up to permutation of entries, that is, there is a permutation $\pi \in S_p$ such that 
\begin{align}
v_{\pi(k) } = v_k^\prime, \ k = 1, \dots, p. 
\end{align} 
Equivalently, we have
\begin{align}
\mu_D = \mu_{D^\prime}. 
\end{align}

\subsection{Identifiablity of SPN Model}\hfill

Next, we work on the SPN model.
We prove the following identifiability of the statistical model $\mu^\Box_\SPN$ for the random matrix model $W_\SPN$.
The proof is divided into an analytic part and a combinatorial one.
\begin{thm}\label{thm:identifiability-spn}
	Let $p,d \in \N$ with $p \geq d$, $A, B \in M_{p,d}(\C)$, and $\sigma, \rho \in \R$. Then 
	$\muSPNB{A}{\sigma} = \muSPNB{B}{\rho}$ if  and only if $\mu_{A^*A} = \mu_{B^*B}$ and $\sigma^2 = \rho^2$.
\end{thm}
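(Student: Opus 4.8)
\emph{Plan.} The implication from right to left is immediate from the definitions (and is recorded in the introduction), so I treat the converse. Assume $\muSPNB{A}{\sigma}=\muSPNB{B}{\rho}$; set $X:=A+\sigma C$, $W:=W^\Box_\SPN(A,\sigma)=X^{*}X$ and $\gamma:=p/d\ge 1$. The strategy has two parts. First, an operator-valued subordination argument produces a functional equation expressing the $R$-transform of $\mu_{W}$ through that of $\mu_{A^{*}A}$ and the two scalars $\sigma^{2},\gamma$ only; this already shows that $\mu_{W}$ together with $\sigma^{2}$ determines $\mu_{A^{*}A}$, so that the theorem reduces to proving $\sigma^{2}=\rho^{2}$. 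Second, a combinatorial argument recovers $\sigma^{2}$ from $\mu_{W}$ by recognising the transformation in the first part as a free multiplicative convolution against a Marchenko--Pastur law and invoking uniqueness of free multiplicative deconvolution.

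\emph{Analytic part.} Hermitise: $\tilde T:=\begin{pmatrix}0&X\\X^{*}&0\end{pmatrix}=T_{0}+\sigma S\in M_{p+d}(\mf A)_{\sa}$, where $T_{0}:=\begin{pmatrix}0&A\\A^{*}&0\end{pmatrix}\in M_{p+d}(\C)$ is constant and $S:=\begin{pmatrix}0&C\\C^{*}&0\end{pmatrix}$ is $M_{p+d}(\C)$-semicircular, with covariance $\eta$ computed directly from $\tau(C_{ij}C_{kl}^{*})=\delta_{ik}\delta_{jl}/d$: it sends a block matrix to a block-diagonal one whose blocks are scalar multiples of identities, the scalars being the normalised traces of the opposite blocks. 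Since $\tilde T^{2}=\diag(XX^{*},X^{*}X)$, the normalised trace of the $(2,2)$-block of $E[(zI-\tilde T)^{-1}]$ equals $z\,G_{\mu_{W}}(z^{2})$. By the operator-valued subordination theory of \cite{belinschi2013analytic} applied to $\tilde T=T_{0}+\sigma S$ — concretely, by translating by the constant $T_{0}$ and using the operator-valued semicircular fixed-point equation — the $M_{p+d}(\C)$-valued Cauchy transform $G$ of $\tilde T$ satisfies $G(Z)=\bigl(Z-T_{0}-\sigma^{2}\eta(G(Z))\bigr)^{-1}$. Specialising $Z=zI$, the block form of $\eta$ together with the unitary invariance of $\mu_{A^{*}A}$ reduces this to a scalar system in two auxiliary functions, which after eliminating them and changing variables becomes
\[
R_{\mu_{W}}\!\Bigl(\tfrac{g}{1+\sigma^{2}g}\Bigr)=\sigma^{2}\gamma\,(1+\sigma^{2}g)+(1+\sigma^{2}g)^{2}\,R_{\mu_{A^{*}A}}(g),
\]
an identity of analytic functions near $g=0$, $R$ denoting the $R$-transform. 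Equivalently, setting $\kappa_{0}:=\gamma$ and $\kappa_{n}:=\kappa_{n}(\mu_{A^{*}A})$ for $n\ge 1$, the free cumulants of $\mu_{W}$ are $\kappa_{n}(\mu_{W})=\sum_{k=0}^{n}\binom{n}{k}\sigma^{2k}\kappa_{n-k}$. From this relation $R_{\mu_{A^{*}A}}$, hence $\mu_{A^{*}A}$, is an explicit function of $R_{\mu_{W}}$ and $\sigma^{2}$; so it remains only to show that $\sigma^{2}$ is determined by $\mu_{W}$.

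\emph{Combinatorial part.} If $\sigma=0$ then $\mu_{W}=\mu_{A^{*}A}$ has finite support, whereas the displayed relation forces $\mu_{W}$ to carry a scaled Marchenko--Pastur component, and hence to have infinite support, whenever $\rho\neq 0$; thus $\sigma=0\Leftrightarrow\rho=0$, and then $\mu_{A^{*}A}=\mu_{W}=\mu_{B^{*}B}$ and we are done. Assume now $\sigma,\rho\neq 0$. The binomial identity on the free cumulants is, up to a dilation, exactly the one governing free multiplicative convolution of $\mu_{A^{*}A}$ with a dilated Marchenko--Pastur (free Poisson) law, the dilation scale being $\sigma^{2}$ and the rate being $\gamma$. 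As $\gamma$ is fixed by $(p,d)$, the uniqueness of free multiplicative deconvolution \cite{ryan2007free,ryan2multiplicative} — which recovers from the observed measure both the underlying ``signal'' measure and the parameter of the Marchenko--Pastur factor — shows that $\mu_{W}$ determines $\mu_{A^{*}A}$ \emph{and} $\sigma^{2}$. In particular $\sigma^{2}=\rho^{2}$, and then the analytic part gives $\mu_{A^{*}A}=\mu_{B^{*}B}$, as required.

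\emph{Main obstacle.} The analytic part only yields the map $(\mu_{W},\sigma^{2})\mapsto\mu_{A^{*}A}$; the real content is that this map cannot be ``inverted in two different ways''. For a candidate value $\rho^{2}$ the inverse binomial transform of $(\kappa_{n}(\mu_{W}))$ (with the convention $\kappa_{0}(\mu_{W}):=\gamma$) produces some sequence $(\tilde\kappa_{n})$, and one must show that $(\tilde\kappa_{n})_{n\ge 1}$ fails to be the free-cumulant sequence of an admissible $\mu_{B^{*}B}$ unless $\rho^{2}=\sigma^{2}$ — that is, one must separate the genuine noise level $\sigma^{2}$ from the free Poisson ``rate'' degree of freedom. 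This rigidity is a positivity phenomenon with no soft analytic substitute; supplying it is exactly what the free multiplicative deconvolution machinery does, and it is the step that makes the SPN model require real work where the CW model did not.
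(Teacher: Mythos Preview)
Your proposal has a genuine gap at the decisive step. In the combinatorial part you assert that ``the uniqueness of free multiplicative deconvolution \ldots\ recovers from the observed measure both the underlying `signal' measure and the parameter of the Marchenko--Pastur factor.'' The references \cite{ryan2007free,ryan2multiplicative} do \emph{not} prove this. Free multiplicative deconvolution says: given $\mu$ and an invertible $\nu$, there is a unique $f$ with $R_\mu = R_\nu \boxconv R_f$. It does not say that if $R_\mu = R_{\nu_1}\boxconv R_{f_1} = R_{\nu_2}\boxconv R_{f_2}$ with $\nu_1,\nu_2$ ranging over a one-parameter family of dilated free Poisson laws and $f_1,f_2$ ranging over measures of the form $M[A^*A]\boxdiag f_\lambda$, then the decomposition is unique. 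That joint identifiability is exactly the theorem you are trying to prove, and your ``Main obstacle'' paragraph in fact concedes that the needed rigidity is a positivity constraint you have not established; attributing it to the deconvolution machinery is circular. (Relatedly, the claim that $\mu_W$ is literally $\mu_{A^*A}\boxtimes(\text{dilated MP})$ is not accurate: the correct relation, which the paper records as Corollary~\ref{cor:deconv-spn}, is $M[\mu_W]\boxdiag f_\lambda=(M[A^*A]\boxdiag f_\lambda)\boxplus M[\delta_{\sigma^2/\lambda}]$, an \emph{additive} shift after a fixed deconvolution.)

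The paper's route is different in structure and supplies the missing argument. Using the additive relation above, one first subtracts the smaller noise level to obtain (Lemma~\ref{lem:key-comb}) $\muSPNB{A}{\sqrt{\sigma^2-\rho^2}}=\muSPNB{B}{0}$, reducing to the case where one parameter vanishes. Then an analytic lemma (Lemma~\ref{lem:key-analy}) shows that $\muSPNB{A}{\sigma}=\muSPNB{B}{0}$ forces $\sigma=0$: one writes the $\mathbb C^2$-valued subordination equation $G_{\Lambda(A)}\!\bigl[\sigma^2\eta(G_{\Lambda(B)}(z))+z\bigr]=G_{\Lambda(B)}(z)$ and exploits the first-order pole of $G_{\Lambda(B)}$ at the largest singular value of $B$ to derive a contradiction if $\sigma\neq 0$. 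Your ``finite versus infinite support'' remark for the degenerate case points in this direction, but you neither prove that $\sigma\neq 0$ forces infinite support nor reduce the general case to the degenerate one; the paper's pole argument is precisely the concrete substitute for the unproved positivity you invoke.
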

The proof is  postponed to  Section~\ref{sssec:proof-of-iden}.

\subsubsection{Analytic Part}\hfill

Write
\begin{align}
\D =  \left\{\begin{bmatrix}
z_1I_d & 0 \\
0 & z_2I_p
\end{bmatrix}
\mid z_1,  z_2 \in \C \right\} \subset M_{p+d}(\C) \subset M_{p+d}(\mf{A}).
\end{align}
We identify $\D$ and $\C^2$ via the following isomorphism $\D \simeq \C^2$:
\begin{align}
\begin{bmatrix}
z_1I_d & 0 \\
0 & z_2I_p
\end{bmatrix}
\mapsto 
\begin{bmatrix}
z_1 \\
z_2
\end{bmatrix}.     
\end{align} 
We define a conditional expectation $E \colon M_{p+d}(\mf{A}) \to \C^2$ by
\begin{align}
E(X) =  \begin{bmatrix}
\tr_{d} \otimes \tau(X_{ +  ,+ }) \\
\tr_{p} \otimes \tau(X_{-,-})
\end{bmatrix},
\end{align}
where $X_{+,+} \in M_d(\mf{A})$ is the $d \times d$-upper left corner of $X \in M_{p,d}(\mf{A})$ and $X_{-,-} \in M_p(\mf{A})$ is the $p \times p$-lower right corner of $X$.
For $X \in M_{p+d}(\mf{A})$ and $z \in \HP(\C^2) = \{ (z_1, z_2) \in \C^2 \mid \Im z_1, \Im z_2 > 0\}$, we write 
\begin{align}
G_{X}(z) &= E[ (z - X)^{-1} ],\\
h_X(z) &= G_X(z)^{-1} - z.
\end{align}
For any rectangular matrix $Y \in  M_{p,d}(\mf{A})$, write  
\begin{align}
\Lambda(Y) = %
\begin{bmatrix}
0 &  Y^* \\
Y &  0
\end{bmatrix}.
\end{align}
Let $z= (\alpha, \beta) \in \C^2$.
Then we have
\begin{align}
\left(z - \Lambda(Y) \right)^{-1} =%
\begin{bmatrix}
\alpha I_d & -Y^* \\
-Y & \beta I_p
\end{bmatrix}^{-1}
=\begin{bmatrix} \beta (\alpha \beta I_d - Y^*Y )^{-1}  & Y^*( \alpha \beta I_p - YY^* )^{-1}\\
( \alpha \beta I_p - YY^*)^{-1} Y & \alpha ( \alpha \beta I_p - YY^* )^{-1} \end{bmatrix}.
\end{align}
Applying $E$, we have
\begin{align}\label{align:temp}
G_{\Lambda(Y)}(z) = \begin{bmatrix}
\beta \tr_d \otimes \tau [(\alpha  \beta I_d - Y^*Y )^{-1}]\\
\alpha \tr_p \otimes \tau [( \alpha \beta I_p - YY^* )^{-1}]
\end{bmatrix}.
\end{align}
In particular, $G_{\Lambda(Y)}$ is determined by $\mu_{Y^*Y}$.
Let $C \in M_{p,d}(\mf{A})$ be a matrix of $*$-free standard circular elements.
By \ifthenelse{\PHD}{Proposition~\ref{prop:d2-freeness}}{\cite[Proposition~5.30]{hayase2018cauchy}},
$\Lambda(C)$ is a $\C^2$-valued semicircular element (see \cite[Section~9.1]{Mingo2017free} for the definition)  with 
the following variance mapping $\eta \colon \C^2 \to \C^2$:
\begin{align}
\eta \left( \begin{bmatrix}
x \\
y
\end{bmatrix} \right)=&
\begin{bmatrix}
(p/d) y \\
x 
\end{bmatrix}.
\end{align}
Hence the following equations hold for any $z \in \HP(\C^2)$:
\begin{align}
G_{\sigma \Lambda(C)}(z)^{-1} &=   z -  \sigma^2 \eta(  G_{\sigma \Lambda(C)}(z) ),\\
h_{\sigma\Lambda(C)}(z)  &=  \sigma^2 \eta(  G_{\sigma \Lambda(C)}(z) ).\label{align:h-trans}
\end{align}

Next, to prove a key lemma, we refer to an analytic free additive subordination formula based on \cite{belinschi2013analytic}.
\begin{cor}\label{cor:a+s}
	Set $a := \Lambda(A)$ and $s := \sigma\Lambda(C)$.
	Then there exists a  pair of Fr\'eche analytic (equivalently, holomorphic) mappings $\psi_1, \psi_2 \in \mr{Hol}(\HP(\C^2))$ so that 
    for all $z \in \HP(\C^2)$, 
    \begin{align}
	\Im \psi_j(z) &\geq \Im z, \forall j \in \{ 1,2 \}, \label{align:a+s-1} \\ 
	h_{a}(\psi_1(z)) + z &= \psi_2(z), \label{align:a+s-2}\\
	h_{s}(\psi_2(z)) + z &= \psi_1(z), \label{align:a+s-3}\\
	G_{a + s}(z) &= G_{a}(\psi_1(z)),  \mr{ \ and, }	 \label{align:a+s-4}\\
    G_{a + s}(z) &= G_{s}(\psi_2(z)).\label{align:a+s-5}
	\end{align}
\end{cor}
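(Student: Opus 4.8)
The plan is to deduce the Corollary directly from the operator-valued analytic free additive subordination theorem of \cite{belinschi2013analytic}, specialized to the finite-dimensional C$^*$-algebra $\mf{B} = \D \simeq \C^2$ together with the conditional expectation $E \colon M_{p+d}(\mf{A}) \to \D \simeq \C^2$ introduced above. Recall that that theorem provides, for any bounded self-adjoint $E$-free pair $x = x^*$, $y = y^*$, a (unique) pair of Fr\'echet holomorphic maps $\omega_1, \omega_2 \colon \HP(\C^2) \to \HP(\C^2)$ with $\Im \omega_j(z) \geq \Im z$, with $G_x(\omega_1(z)) = G_{x+y}(z) = G_y(\omega_2(z))$, and with $\omega_1(z) + \omega_2(z) - z = G_{x+y}(z)^{-1}$ for all $z \in \HP(\C^2)$.

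First I would check the hypotheses with $x = a = \Lambda(A)$ and $y = s = \sigma\Lambda(C)$. Both are bounded and self-adjoint, since $\Lambda(Y)^* = \Lambda(Y)$ for every rectangular $Y$ and $\sigma \in \R$, and $a + s = \Lambda(A + \sigma C)$ is again of this form; passing, if needed, to the von Neumann algebra generated in the GNS representation of $\tr_{p+d} \otimes \tau$ puts us in the W$^*$-setting used in \cite{belinschi2013analytic}. The only substantive point is that $a$ and $s$ are $E$-free over $\D$. Since $a$ lies in the scalar matrix algebra $M_{p+d}(\C)$, which contains $\D$, while $s$ is built from the matrix $C$ of $*$-free standard circular elements, this is exactly the structural fact used in \cite[Proposition~5.30]{hayase2018cauchy} to identify $\Lambda(C)$ as a $\C^2$-valued semicircular element: a $*$-free circular family is free from the scalars, and this freeness lifts to $E$-freeness of $M_{p+d}(\C)$ (hence of the subalgebra generated by $\D$ and $a$) from the $*$-subalgebra generated by $\D$ and $\Lambda(C)$.

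Granting this, set $\psi_1 := \omega_1$ and $\psi_2 := \omega_2$. Then \eqref{align:a+s-1} is the imaginary-part estimate, and \eqref{align:a+s-4}, \eqref{align:a+s-5} are the two subordination identities. For \eqref{align:a+s-2}, substitute $G_{a+s}(z) = G_a(\psi_1(z))$ into $\psi_1(z) + \psi_2(z) - z = G_{a+s}(z)^{-1}$ to get $\psi_2(z) = z + \big(G_a(\psi_1(z))^{-1} - \psi_1(z)\big) = z + h_a(\psi_1(z))$; the symmetric computation using $G_{a+s}(z) = G_s(\psi_2(z))$ yields $\psi_1(z) = z + h_s(\psi_2(z))$, which is \eqref{align:a+s-3}. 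This closes the argument.

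I expect the $E$-freeness of $a$ and $s$ to be the only genuine obstacle, precisely because $\D$ sits non-centrally in $M_{p+d}(\C)$, so one cannot simply invoke amalgamation over the full matrix algebra; the argument must really use the $*$-freeness of the circular entries from the scalars together with the two-block structure encoded in $\Lambda$ and $\D$. Everything downstream of that is a formal rearrangement of the general functional equations.
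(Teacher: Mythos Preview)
Your proposal is correct and follows essentially the same approach as the paper: the paper's proof also cites \cite[Proposition~5.30]{hayase2018cauchy} for the $E$-freeness of $(a,s)$ and then invokes \cite[Theorem~2.7]{belinschi2013analytic} directly. Your explicit derivation of \eqref{align:a+s-2} and \eqref{align:a+s-3} from the relation $\psi_1(z)+\psi_2(z)-z = G_{a+s}(z)^{-1}$ simply spells out what the paper leaves implicit in the phrase ``the assertion follows.''
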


\begin{proof}
	By 
	\ifthenelse{\PHD}{Proposition~\ref{prop:d2-freeness}}{\cite[Proposition~5.30]{hayase2018cauchy}}, 
	the pair $(a,s)$ is $E$-free. 
	Then the assertion follows from \cite[Theorem~2.7]{belinschi2013analytic}.
\end{proof}

\begin{lem}\label{lem:subord}
	Let $p,d \in \N$ with $p\geq d$. Let $A \in M_{p,d}(\C)$ and $\sigma \in \R$. 
	Then   we have the following equation between holomorphic mappings on $\HP(\C^2)$:
	\begin{align}
	G_{\Lambda(A + \sigma C)}(z) =
	G_{\Lambda(A)}\left[  \sigma^2 \eta\left(G_{\Lambda(A + \sigma C)} ( z )  \right) + z \right], \forall z \in \HP(\C^2).
	\end{align}
\end{lem}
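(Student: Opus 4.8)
The plan is to feed the $\C^2$-valued additive subordination of Corollary~\ref{cor:a+s} into the explicit form of the $h$-transform of a semicircular element recorded in \eqref{align:h-trans}, and then to exploit the $\C$-linearity of $\Lambda$.

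First I would record that, since $\Lambda$ is $\C$-linear, $\Lambda(A + \sigma C) = \Lambda(A) + \sigma\Lambda(C) = a + s$ with $a := \Lambda(A)$ and $s := \sigma\Lambda(C)$ as in Corollary~\ref{cor:a+s}; in particular $G_{\Lambda(A+\sigma C)} = G_{a+s}$ and $G_{\Lambda(A)} = G_a$. Fix $z \in \HP(\C^2)$ and let $\psi_1, \psi_2 \in \mr{Hol}(\HP(\C^2))$ be the subordination maps supplied by Corollary~\ref{cor:a+s}. By \eqref{align:a+s-1} we have $\Im \psi_2(z) \geq \Im z > 0$, so $\psi_2(z) \in \HP(\C^2)$ and \eqref{align:h-trans} may be invoked at the point $\psi_2(z)$, giving
\[
h_s(\psi_2(z)) = \sigma^2 \eta\!\left(G_s(\psi_2(z))\right).
\]

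Next I would substitute the remaining subordination identities. By \eqref{align:a+s-5}, $G_s(\psi_2(z)) = G_{a+s}(z)$, hence $h_s(\psi_2(z)) = \sigma^2 \eta(G_{a+s}(z))$. Plugging this into \eqref{align:a+s-3} yields $\psi_1(z) = h_s(\psi_2(z)) + z = \sigma^2 \eta(G_{a+s}(z)) + z$, and in particular this point lies in $\HP(\C^2)$ by \eqref{align:a+s-1}, so $G_a$ is holomorphic there. Finally, inserting this expression for $\psi_1(z)$ into \eqref{align:a+s-4} gives
\[
G_{a+s}(z) = G_a(\psi_1(z)) = G_a\!\left(\sigma^2 \eta(G_{a+s}(z)) + z\right),
\]
which, after translating back via $a+s = \Lambda(A+\sigma C)$ and $a = \Lambda(A)$, is precisely the claimed formula. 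Since $z \in \HP(\C^2)$ was arbitrary and both sides are compositions of holomorphic maps whose domains are preserved by \eqref{align:a+s-1}, this is an equality of holomorphic mappings on $\HP(\C^2)$.

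The argument is essentially bookkeeping once Corollary~\ref{cor:a+s} is in hand; I expect the only delicate point to be the legitimacy of applying \eqref{align:h-trans}, which was stated for arguments in $\HP(\C^2)$, at the \emph{subordinated} point $\psi_2(z)$ — and this is exactly what \eqref{align:a+s-1} (the property $\Im\psi_j(z) \geq \Im z$, hence $\psi_j(\HP(\C^2)) \subseteq \HP(\C^2)$) is there to guarantee. One should also keep track of the sign/orientation conventions in $h_X(z) = G_X(z)^{-1} - z$ and in the variance map $\eta$ so that the composition matches \eqref{align:a+s-2}--\eqref{align:a+s-3} as written; no analytic input beyond Corollary~\ref{cor:a+s} and \eqref{align:h-trans} is required.
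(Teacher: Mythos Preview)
Your proof is correct and follows the same route as the paper --- both chain together \eqref{align:a+s-4}, \eqref{align:a+s-3}, \eqref{align:h-trans}, and \eqref{align:a+s-5} to eliminate $\psi_1,\psi_2$ --- and you are in fact more explicit than the paper about why \eqref{align:h-trans} may be evaluated at $\psi_2(z)$. One small slip: $\Lambda$ involves $Y^*$ and is only $\R$-linear, not $\C$-linear, but since $\sigma\in\R$ the identity $\Lambda(A+\sigma C)=\Lambda(A)+\sigma\Lambda(C)$ is unaffected.
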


\begin{proof}
	Set  $a := \Lambda(A)$ and $s:= \Lambda(C)$.
	Pick same holomorphic mappings $\psi_1$ and  $\psi_2$ as in Corollary~\ref{cor:a+s}. Then for any  $z \in \HP(\C^2)$,
	\begin{align}
	G_{a + s}(z) &= G_{a} ( \psi_1(z) )%
	&\text{(by \eqref{align:a+s-4})}\\
	&= G_{a} \left( h_{s}(\psi_2(z)) + z \right )%
	&\text{(by \eqref{align:a+s-3})}\\ 
	&= G_{a} \left( \sigma^2\eta(G_{s}(\psi_2(z) )) + z \right )%
	&\text{(by \eqref{align:h-trans})}\\
	& = G_{a} \left( \sigma^2\eta(G_{a+s}(z)) + z \right ).%
	&\text{(by \eqref{align:a+s-5})}
	\end{align}
\end{proof}

Now we have prepared to prove the first key lemma.
\begin{lem}\label{lem:key-analy}
	Fix $p,d \in \N$ with $p \ge d$. Let $A, B \in M_{p,d}(\C)$ and $\sigma \in \R$.
	If $\muSPNB{A}{\sigma}  = \muSPNB{B}{0}$ then $\sigma = 0$.
\end{lem}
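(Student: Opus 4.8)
The plan is a proof by contradiction. Suppose $\sigma \ne 0$ while $\muSPNB{A}{\sigma} = \muSPNB{B}{0}$. Since $\muSPNB{B}{0} = \mu_{B^*B}$ and $\muSPNB{A}{\sigma} = \mu_{(A+\sigma C)^*(A+\sigma C)}$, the hypothesis reads $\mu_{(A+\sigma C)^*(A+\sigma C)} = \mu_{B^*B}$; together with the observation after \eqref{align:temp} that (because $p \ge d$, so $\mu_{YY^*} = \tfrac dp\mu_{Y^*Y} + (1-\tfrac dp)\delta_0$) the map $G_{\Lambda(Y)}$ is determined by $\mu_{Y^*Y}$, this upgrades to an identity of $\C^2$-valued holomorphic maps on $\HP(\C^2)$, namely $G_{\Lambda(A + \sigma C)} = G_{\Lambda(B)}$. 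Substituting this into Lemma~\ref{lem:subord} applied to the pair $(A,\sigma)$ produces the closed relation
\begin{align}\label{align:plan-key}
G_{\Lambda(B)}(z) \;=\; G_{\Lambda(A)}\!\bigl[\,\sigma^2\,\eta\bigl(G_{\Lambda(B)}(z)\bigr) + z\,\bigr], \qquad z \in \HP(\C^2),
\end{align}
which is the object I would interrogate; note that the inner map is exactly the subordination function $\psi_1$ of Corollary~\ref{cor:a+s}, so it sends $\HP(\C^2)$ into itself.

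The next step uses that $\Lambda(A)$ and $\Lambda(B)$ are genuine finite self-adjoint complex matrices, so $G_{\Lambda(A)}$ and $G_{\Lambda(B)}$ are rational in $z=(\alpha,\beta)$; by \eqref{align:temp}, $G_{\Lambda(Y)}(z) = \bigl(\beta\,G_{\mu_{Y^*Y}}(\alpha\beta),\ \alpha\,G_{\mu_{YY^*}}(\alpha\beta)\bigr)$. Assume first $B \ne 0$ and let $\lambda_* := \lambda_{\max}(B^*B) > 0$, an atom of $\mu_{B^*B}$; because $\lambda_* > 0$, it is then also a pole of $G_{\mu_{BB^*}}$. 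Choose $r,s>0$ with $rs = \lambda_*$ and set $z_\epsilon := (re^{i\epsilon},\,se^{i\epsilon}) \in \HP(\C^2)$; as $\epsilon \downarrow 0$ the product $\alpha\beta = \lambda_* e^{2i\epsilon} \to \lambda_*$ nontangentially while $\alpha,\beta$ stay bounded away from $0$, so both entries of $G_{\Lambda(B)}(z_\epsilon)$ diverge. Since $\eta$ is a linear isomorphism and $\sigma^2 \ne 0$, both entries of $\sigma^2\eta(G_{\Lambda(B)}(z_\epsilon)) + z_\epsilon =: (\alpha'_\epsilon,\beta'_\epsilon)$ diverge, hence so does $\alpha'_\epsilon\beta'_\epsilon$; then by the elementary decay $G_\mu(w) = w^{-1} + O(|w|^{-2})$ for compactly supported $\mu$, the right-hand side $G_{\Lambda(A)}(\alpha'_\epsilon,\beta'_\epsilon) = \bigl(\beta'_\epsilon G_{\mu_{A^*A}}(\alpha'_\epsilon\beta'_\epsilon),\ \alpha'_\epsilon G_{\mu_{AA^*}}(\alpha'_\epsilon\beta'_\epsilon)\bigr)$ of \eqref{align:plan-key} tends to $0$, contradicting that its left-hand side diverges. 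The remaining case $B = 0$ I would handle directly: there $\muSPNB{A}{\sigma} = \delta_0$ forces $(A+\sigma C)^*(A+\sigma C) = 0$ by faithfulness of $\tr_d\otimes\tau$, hence $A+\sigma C = 0$, hence $C = -\sigma^{-1}A$ is a scalar matrix, contradicting $\tau(C_{ij}) = 0 \ne 1/d = \tau(C_{ij}^*C_{ij})$.

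Conceptually the content is light: the subordination function $\psi_1$ carries the pole locus of $G_{\Lambda(B)}$ off to infinity, where $G_{\Lambda(A)}$ decays, so the circular noise cannot leave any atom of a singular-value distribution intact. The main obstacle I expect is purely bookkeeping — arranging the approach path $z_\epsilon$ so that simultaneously $\alpha\beta \to \lambda_*$ in a direction forcing $G_{\mu_{B^*B}}(\alpha\beta)\to\infty$, $\alpha$ and $\beta$ avoid $0$, and $(\alpha'_\epsilon,\beta'_\epsilon)$ steers clear of the finitely many poles of $G_{\mu_{A^*A}}$ and $G_{\mu_{AA^*}}$ — together with keeping straight that the top edge $\lambda_*$ is a pole of both relevant scalar Cauchy transforms precisely because $B \ne 0$, which is exactly why that case must be split off.
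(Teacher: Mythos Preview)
Your proposal is correct and follows essentially the same route as the paper: the same case split on $B=0$ versus $B\neq 0$, the same subordination identity \eqref{align:key-sub} derived from Lemma~\ref{lem:subord}, and the same contradiction obtained by approaching the top singular value of $B$ along a path in $\HP(\C^2)$, where $G_{\Lambda(B)}$ blows up while $G_{\Lambda(A)}$ evaluated at the subordinated argument stays bounded. The only cosmetic differences are your choice of path $z_\epsilon=(re^{i\epsilon},se^{i\epsilon})$ in place of the paper's $(\beta+i\gamma,\beta+i\gamma)$, your use of the asymptotic $G_\mu(w)\sim w^{-1}$ in place of the paper's explicit factorization by $(z_1z_2-\beta^2)$, and a slightly different phrasing of the $B=0$ case.
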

\begin{proof}

	Assume that $\muSPNB{A}{\sigma}  = \muSPNB{B}{0}$.
	Then $\ccop{\Lambda(A+\sigma C)}{}=\ccop{\Lambda(B)}{}$ since $G_{\Lambda(Y)}$ is determined by $\mu_{Y^*Y}$ for any $Y \in M_{p,d}(\mf{A})$.
	
	In the case $B = 0$, it holds that  $(A + \sigma C)^*(A+\sigma C)=0$.
	Thus $A = - \sigma C$ and $A^*A = \sigma^2 C^*C$.  Since $\mu_{C^*C}$ has no atom and $\mu_{A^*A}$ is a sum of delta measures,  we have $\sigma = 0$.
	
	Consider the case $B \neq 0$. Write $\beta:= \norm{B^*B}^{1/2} > 0$.
	Now for any  $z \in \HP(\C^2)$,  by the assumption and Lemma~\ref{lem:subord}, the following holds:
	\begin{align}
	G_{\Lambda(A)} \left[  \sigma^2 \eta \left( G_{\Lambda(B)}(z) \right) + z \right] =%
	G_{\Lambda(B)}(z), z \in \HP(\C^2). \label{align:key-sub}
	\end{align}
	Let
	\begin{align}
	g(z) &:=G_{\Lambda(B)}(z),\\
	f(z) &:= (z_2 z_1 - \beta^2)G_{\Lambda(B)}(z).
	\end{align}
	Then  
	\begin{align}\label{align:degree-one}
	\lim_{\gamma \to +0} f( \beta + i\gamma,\beta + i\gamma) = ( \frac{m\beta}{d}, \frac{m \beta}{p}) \neq 0,
	\end{align}
	where $m \geq 1$ is the multiplicity of the eigenvalue $\beta$ of $\sqrt{B^*B}$.
	Let $a_1 \leq \dots \leq a_d$ be eigenvalues of $\sqrt{A^*A}$. Then 
	\begin{align}
	G_{\Lambda(A)}\left( z_1, z_2 \right) =  (\frac{z_2}{d}\sum_{k=1}^d \frac{1}{z_2z_1 - a_k^2}, %
	\frac{z_1}{p} \sum_{k=1}^{d}\frac{1}{z_2z_1 - a_k^2} + \frac{p-d}{pz_2}).
	\end{align}
	Now for any $k=1,\dots,d$ and $j=1,2$,
	\begin{align}\label{align:rational}
	\frac{g(z)_j}{ g(z)_1g(z)_2 - a_k^2} = \frac{f(z)_j}{f(z)_1f(z)_2 - a_k^2(z_1z_2-\beta^2)^2} (z_1z_2-\beta^2).
	\end{align}
	Let $\gamma >0$ and $z_1=z_2=\beta+i\gamma$. Then \eqref{align:rational} converges to 
	$0$ as $\gamma \to +0$ by \eqref{align:degree-one}.
	
	Assume that $\sigma \neq 0$,  then by $\eqref{align:rational}$, it holds that
	\begin{align}
	\lim_{ \substack{ z = (\beta + i \gamma,\beta + i \gamma)\\ \gamma \to +0} } G_{\Lambda(A)} \left[\sigma^2 \eta\left( G_{\Lambda(B)}(z) \right)  + z \right] =  (0,  \frac{p-d}{p\beta}).
	\end{align}
	In particular,
	\begin{align}
	\lim_{ \substack{ z = (\beta + i \gamma,\beta + i \gamma)\\ \gamma \to +0} } (z_1z_2-\beta^2)G_{\Lambda(A)} \left[\sigma^2 \eta\left( G_{\Lambda(B)}(z) \right)  + z \right] =  0.
	\end{align}
	By  \eqref{align:key-sub}, this contradicts $\eqref{align:degree-one}$.
	Therefore $\sigma = 0$.
\end{proof}

\subsubsection{Combinatorial Part}\hfill

\newcommand{\boxconv}{ \framebox[7pt]{$\star$} }
\newcommand{\fpower}{\Xi}
\newcommand{\coef}[3]{ \mathrm{Cf}_{#2#3}(#1) }
\newcommand{\fzeta}{\mathrm{Zeta}}

We use the free multiplicative deconvolution introduced by \cite{ryan2multiplicative,ryan2007free}.
We quickly review the deconvolution.

First, we introduce a family of formal power series, since the deconvolution is defined as an operation between moment power series.
Let us denote by $\fpower$ the set of formal power series without the constant term of the form
\begin{align}\label{align:power}
f(z) = \sum_{n=1}^\infty \alpha_n z^n, 
\end{align}
with $\alpha_n \in \C (\forall n \in \N)$.
Let $f  \in \fpower$ be as in  \eqref{align:power}. For every $n \in \N$ we denote
\begin{align}
\coef{f}{n}{} = \alpha_n.
\end{align}

Second, we introduce Kreweras complement and boxed convolution.
Here we only need one-dimensional boxed convolution. 
See \cite[Lecture 17, 18]{nica2006lectures} for the detail.
Let $n \in \N$ and $\pi \in \NC(n)$. 
Write  $[n] = \{ 1, 2, \dots, n \}$ and consider the discriminant union $[n] \coprod [n]$.  We write the elements from the second entry as $\Bar{k} \ (k \in [n])$, and write $[\Bar{n}]= \{ \Bar{1}, \Bar{2}, \dotsm \Bar{n}\}$.
We define an order as follows:
\begin{align}
1 \leq  \Bar{1} \leq 2 \leq \Bar{2} \dots \leq n \leq \Bar{n}.
\end{align}
Then the set $[n] \coprod [n]$ is a totally ordered set. 
Let $\pi \in \NC(n)$ and 
\begin{align}
J:= \{ \rho \in \NC([\Bar{n}]) \mid \pi \cup \rho \in \NC([n] \cup [n]) \}.
\end{align}
Then $J$ has the biggest element with respect to the following partially order of $\NC(n)$: for $\rho$ and $\pi \in \NC(n)$,  $\rho \leq \pi $ if $\forall V_1, V_2 \in \rho, \exists W \in \pi$ such that $V_1 \cup V_2 \subset W$.
The \emph{Kreweras complement} of $\pi$, denoted by $K(\pi)$ is defined as
\begin{align}
K(\pi)  := \max J.\label{align:Kreweras}
\end{align}
%
For $n \in \N$ and $\NC(n)$,  we denote
\begin{align}
\coef{f}{n}{;\pi}:= \prod_{V \in \pi}\coef{f}{\abs{V}}{},
\end{align}
where $\abs{V}$ is the number of elements in $V$.
For $f, g \in \fpower$, the \emph{one dimensional boxed convolution} (boxed convolution, for short), denoted by $f \boxconv g $ is defined as
\begin{align}
(f \boxconv g ) (z) := \sum_{m=1}^\infty \sum_{\pi \in \NC(m)} \coef{f}{n}{;\pi}\coef{g}{n}{;K(\pi)}  z^m,
\end{align}
where $K(\pi)$ is the Kreweras complement \eqref{align:Kreweras}.
One has the operation $\boxconv$ is associative and commutative \cite[Proposition~17.5, Corollary~17.10]{nica2006lectures}.
In addition, let us denote by $\Delta$ the series in $\fpower$ defined as
\begin{align}
\Delta(z) = z.
\end{align}
Then $\Delta$ is the unit of $(\fpower, \boxconv)$ \cite[Proposition~17.5]{nica2006lectures}.
We denote by $\fpower^\times$ the set of invertible elements in $\fpower$ with respect to $\boxconv$.
For $f \in \fpower$, we denote by $f^{-1}$ its  inverse with respect to $\boxconv$.
Then by \cite[Proposition~17.7]{nica2006lectures},
\begin{align}
\fpower^\times = \{ f \in \fpower  \mid \coef{f}{1}{} \neq 0 \}.
\end{align}

Third, we define the \emph{Zeta function} as 
\begin{align}
\fzeta(z) := \sum_{n=1}^{\infty} z^n.
\end{align}
Clearly $\fzeta \in \fpower^{\times}$.
Then we define the R-transform of formal power series.
\begin{defn}(R-transform)
	Let $f \in \fpower$. Let us define the $\emph{R-transform}$ of $f$ as
	\begin{align}
	R_f :=   f  \boxconv \fzeta^{-1}.
	\end{align}
\end{defn}

For any probability measure $\mu$ on $\R$ with all moments finite, we denote by $M_\mu$  its moment formal power series:
\begin{align}
M[\mu](z) = \sum_{n=1}^\infty m_n(z) z^n.
\end{align}
Let $(\mf{A}, \phi)$ be a C$^*$-probability
space, and let  $a$ be an element of $\mf{A}$.
The \emph{moment  power series} of $a$, denote by $M_a$, is a formal power series  defined as 
\begin{align}
M[a](z) = \sum_{n=1}^\infty \phi(a^n)z^n.
\end{align}    
We simply write 
\begin{align}
R[\mu] &= R_{M[\mu]},\\
R[a] &= R_{M[a]}. \label{align:r-compati}
\end{align}

Usually R-transform of  $a \in \mf{A}$ is defined as formal power series whose  coefficients are free cumulants (see \cite{nica2006lectures}). The compatibility   of our definition \eqref{align:r-compati} and usual definition is proven in \cite[Proposition~17.4]{nica2006lectures}.
In addition, the following holds.
\begin{lem}\label{lem:R[ab]}
	Let $(\mf{A}, \phi)$ be a C$^*$-probability space and  $a, b \in \mf{A}$.
	Assume that $(a, b)$ is free.
	Then 
	\begin{align}
	R[ab] = R[a] \boxconv R[b].
	\end{align}
\end{lem}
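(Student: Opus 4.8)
Everything in this lemma is formal: I would work entirely with power series in $\fpower$, so $a$, $b$ and $ab$ need not be self-adjoint, and only the moment series $M[a], M[b], M[ab] \in \fpower$ enter. The plan is to first rewrite $\phi((ab)^n)$ using the combinatorial description of moments of a product of free elements, read the result off as a boxed convolution, and then unwind the definition $R[\,\cdot\,] = M[\,\cdot\,]\boxconv\fzeta^{-1}$.

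First I would record the identification of our R-transform with the free cumulant series: by \cite[Proposition~17.4]{nica2006lectures}, the coefficient $\coef{R[a]}{m}{}$ equals the $m$-th free cumulant $\kappa_m(a)$ of $a$ for every $m \in \N$ (and likewise for $b$). Since $(a,b)$ is free, the mixed-moment formula for products of free random variables (see \cite[Theorem~14.4]{nica2006lectures}) gives, for each $n \in \N$,
\begin{align}
\phi((ab)^n) = \sum_{\pi \in \NC(n)} \Big( \prod_{V \in \pi} \kappa_{\abs{V}}(a) \Big) \Big( \prod_{W \in K(\pi)} \phi(b^{\abs{W}}) \Big) = \sum_{\pi \in \NC(n)} \coef{R[a]}{n}{;\pi}\, \coef{M[b]}{n}{;K(\pi)}.
\end{align}
Comparing with the definition of the one-dimensional boxed convolution, the left-hand side is $\coef{M[ab]}{n}{}$ while the right-hand side is $\coef{R[a] \boxconv M[b]}{n}{}$; hence $M[ab] = R[a] \boxconv M[b]$.

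Then I would combine this with the definition $R[ab] = M[ab]\boxconv\fzeta^{-1}$, the associativity of $\boxconv$ (\cite[Proposition~17.5]{nica2006lectures}), and $R[b] = M[b]\boxconv\fzeta^{-1}$, to conclude
\begin{align}
R[ab] = M[ab] \boxconv \fzeta^{-1} = \big( R[a] \boxconv M[b] \big) \boxconv \fzeta^{-1} = R[a] \boxconv \big( M[b] \boxconv \fzeta^{-1} \big) = R[a] \boxconv R[b],
\end{align}
which is the assertion.

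The argument is essentially a citation of \cite{nica2006lectures}; the one point that needs care is the bookkeeping of which of $\pi$ and its Kreweras complement $K(\pi)$ carries the cumulants of $a$ and which carries the moments of $b$. Since $\boxconv$ is commutative (\cite[Corollary~17.10]{nica2006lectures}), either convention produces the same series, so $M[ab] = R[a]\boxconv M[b]$ holds regardless; still, one should line up the placement of $K(\pi)$ with the definition of $\boxconv$ recorded above. No analytic input enters this lemma.
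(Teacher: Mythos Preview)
Your proof is correct and essentially the same as the paper's: the paper simply cites \cite[Proposition~17.2]{nica2006lectures}, and your argument is just an unpacking of that citation via the cumulant/moment identities of Lecture~14 and the formal properties of $\boxconv$ from Lecture~17. (A minor point: Theorem~14.4 in \cite{nica2006lectures} is usually stated as the cumulant identity $\kappa_n(ab,\dots,ab)=\sum_{\pi}\kappa_\pi(a)\kappa_{K(\pi)}(b)$, which already is $R[ab]=R[a]\boxconv R[b]$; your moment version is equivalent after one application of $\boxconv\fzeta$, so no harm done.)
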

\begin{proof}
	This is a direct consequence of \cite[Proposition~17.2]{nica2006lectures}.
\end{proof}
Lastly, note that  it holds that  for $f \in \fpower$,
\begin{align}
f \in \fpower^{\times} \text{   if and only if  } R_f \in \fpower^{\times},
\end{align}
since $\coef{R_f}{1}{} = \coef{f}{1}{}$.
Now we have prepared to define the free multiplicative deconvolution.

\begin{defn}(free multiplicative deconvolution)
	For $f \in \fpower$ and $g \in \fpower^\times$,  the \emph{free multiplicative deconvolution} of $f$ with $g$ is defined as  
	\begin{align}
	f \boxdiag g :=  (R_f \boxconv R_g^{-1} ) \boxconv \fzeta.
	\end{align}
	Equivalently, $f \boxdiag g$ is the unique formal power series in $\fpower$
	determined by 
	\begin{align}
	R_f =  R_g \boxconv R_{(f\boxdiag g)}.
	\end{align}
\end{defn}

\begin{example}
	Let $ \beta \in \R$ and 
	$\delta_{\beta}$  be the delta measure on $\R$ whose support is $\{ \beta\} \subset \R$.
	Then  
	\begin{align}
	M[\delta_\beta](z) &=   \sum_{n=1}^\infty \beta^n z^n = [\fzeta \boxconv (\beta \Delta) ] (z),
	\end{align}
	since
	\begin{align}
	\coef{\beta \Delta}{n}{;K(\pi)} =  \begin{cases}
	\beta^n &; \pi = \{ \{1,2, \dots, n \} \},\\
	0&;  \mathrm{ otherwise}.
	\end{cases}
	\end{align}
	Note that $K( \{ \{1,2, \dots, n \} \} ) = \{\{1\}, \{2\}, \dots, \{n\}  \}$.
	Hence 
	\begin{align}
	R[\delta_\beta] = \beta \Delta.
	\end{align}
	Then for any $f \in \Xi$, we have
	\begin{align}
	R_{f(\beta \ \cdot\ )}[z] =  \sum_{n=1}^{\infty}  \coef{R_f}{n}{} \beta^n z^n =  R_f \boxconv R[\delta_\beta] (z).
	\end{align}
	In particular, if  $f \in \fpower^\times$, it holds that
	\begin{align}\label{align:scalar-deconv}
	 f ( \beta \ \cdot  \ ) \boxdiag f =  M[\delta_\beta].
	\end{align}
    In the case $f = M[a]$ with $a \in \mf{A}$, it is easy to show that 
	\begin{align}
     M[\beta a] \boxdiag M[a] =  M[\beta] = M[\delta_\beta],
    \end{align}
    since each scalar is free from any element of $\mf{A}$.
\end{example}

\begin{defn}
	Let $f, g \in \fpower$. 
	Then their \emph{free additive convolution}, denoted by $f \boxplus g \in \fpower$,  is defined as 
	\begin{align}
	f \boxplus g := ( R_f + R_g ) \boxconv \fzeta.
	\end{align}
	Equivalently, $f \boxplus g$ is the unique formal power series in $\fpower$ determined by 
	\begin{align}
	R_{f \boxplus g } = R_f + R_g.
	\end{align}
\end{defn}

\begin{note}
	Let $(\mf{A}, \phi)$ be a C$^*$-probability space. Let $q \in \mf{A}$ be a non-zero projection, that is, $q=q^*=q^2$.
	Then 
	\begin{align}
	(q\mf{A}q, \frac{1}{\phi(q)}\phi)
	\end{align}
	becomes a C$^*$-probability space.
    For $a \in \mf{A}$,
	we denote by $ M^{q\mf{A}q}[qaq]$ the moment power series of $qaq \ (a \in \mf{A})$ in $(q\mf{A}q,  \phi(q)^{-1}\phi)$:
	\begin{align}
	M^{q\mf{A}q}[qaq]  = \sum_{n=1}^\infty \frac{1}{\phi(q)} \phi[ (q a q)^n ] z^n.
	\end{align}
\end{note}

\begin{prop}\label{prop:ryan}
	Let $(\mf{A}, \phi)$ be a C$^*$-probability space.
	Assume that $a,c,p \in \mf{A}$ satisfies the following conditions:
	\begin{enumerate}
		\item $a^*=a$,
		\item $c$ is a circular element, that is, 
		\begin{align}
		c = \sigma \frac{s_1 + i s_2}{ \sqrt{2}},
		\end{align}
		where $(s_1, s_2)$ is a pair of free standard semicircular elements
		in $(\mf{A}, \phi)$ and $\sigma \in \R$,
		\item $q$ is a projection, and,
		\item $( \{c, c^*\}, \{a, q\} )$ is a pair of free families.
	\end{enumerate}
	Set $\lambda := \phi(q)$ and 
	\begin{align}
	f_\lambda(z) := \sum_{n=1}^\infty \lambda^{n-1}z^n.
	\end{align}
	Then  we have 
	\begin{align} \label{align:ryan-moment}
	M^{q\mf{A}q}[q(a+c)^*(a+c)q ]\boxdiag f_\lambda  
	=  (M^{q\mf{A}q}[qa^*aq] \boxdiag f_\lambda) \boxplus  (  M^{q\mf{A}q}[qc^*cq] \boxdiag f_\lambda ).
	\end{align}
\end{prop}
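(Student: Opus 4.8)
The plan is to realize the compressed moment series $M^{q\mf{A}q}[q(a+c)^*(a+c)q]$ as the moment series of a product of two free elements in a suitable compressed C$^*$-probability space, and then apply Lemma~\ref{lem:R[ab]} together with the scalar deconvolution identity \eqref{align:scalar-deconv}. First I would observe the standard rectangular-to-square trick: for any $x \in \mf{A}$, the element $qx^*xq$ has, up to the atom at $0$ coming from the difference in ``dimensions,'' the same distribution data as $x q x^*$; more precisely I would work with $(a+c) q (a+c)^* = (a q^{1/2} + c q^{1/2})(a q^{1/2}+c q^{1/2})^*$ and track moments through the trace. The key algebraic point is that $c$ circular and $q$ a projection free from $\{a,c,c^*\}$ means $c q$ is again (a scalar multiple of) a circular element after normalization, and $cqc^*$ is a free compound Poisson / Marchenko--Pastur-type element whose $R$-transform is explicit. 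The role of $f_\lambda$ is exactly to undo the ``$\lambda$-rectangularity'': deconvolving by $f_\lambda$ converts a rectangular free convolution into an ordinary ($\boxplus$) one, which is why \eqref{align:ryan-moment} has the shape it does.

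Concretely, the steps I would carry out are: (i) rewrite $q(a+c)^*(a+c)q$ and expand $(a+c)^*(a+c) = a^*a + a^*c + c^*a + c^*c$; (ii) use freeness of $\{c,c^*\}$ from $\{a,q\}$ to compute, via the moment--cumulant formula and the vanishing of mixed cumulants, that in the compressed space $q\mf{A}q$ the element $q(a+c)^*(a+c)q$ has the same $\ast$-distribution as a sum $x+y$ where $x$ is a copy of $qa^*aq$ and $y$ is a copy of $qc^*cq$ and $(x,y)$ is free — this is essentially Ryan's computation and is where I would cite \cite{ryan2007free, ryan2multiplicative}; (iii) apply $R[\,\cdot\,]$ and Lemma~\ref{lem:R[ab]} / additivity of $R$ under $\boxplus$; (iv) finally apply $\boxdiag f_\lambda$ to both sides and use that $\boxdiag f_\lambda$ is a $\boxplus$-homomorphism in the sense that $R_{(u \boxplus v)\boxdiag f_\lambda} = R_{u \boxdiag f_\lambda} + R_{v \boxdiag f_\lambda}$, which follows formally from the definition $f \boxdiag g = (R_f \boxconv R_g^{-1})\boxconv \fzeta$ and distributivity of $\boxconv$ over $+$.

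The main obstacle, and the part I would spend the most care on, is step (ii): making precise the sense in which the $\lambda$-deformed free convolution governs $q(a+c)^*(a+c)q$. The subtlety is that $qa^*aq$, $qc^*cq$ and $q(a+c)^*(a+c)q$ live in $q\mf{A}q$ but the ``free'' structure that produces the additivity is not freeness of $qa^*aq$ and $qc^*cq$ in $q\mf{A}q$ directly — rather it comes from $a^*a$, $c^*c$ being (asymptotically) free in $\mf{A}$ and the compression $q$ interacting with them, which is exactly the content of rectangular free probability. I would handle this by reducing to the key identity of \cite{ryan2007free}: the rectangular free convolution of the squared singular value laws, transported through the $f_\lambda$-deconvolution, becomes the classical $\boxplus$; since $a+c$, $a$, $c$ are all genuinely $p\times d$ (equivalently $q$-compressed) objects with the \emph{same} rectangularity ratio $\lambda$, the three deconvolved series are linked by $\boxplus$, and \eqref{align:ryan-moment} is the conjunction of this with $R_{(u\boxplus v)\boxdiag f_\lambda}=R_{u\boxdiag f_\lambda}+R_{v\boxdiag f_\lambda}$. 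The remaining verifications — that $qc^*cq$ really is the compound Poisson element with the stated parameter, and bookkeeping of the normalization $\lambda = \phi(q)$ — are routine moment computations I would not grind through here.
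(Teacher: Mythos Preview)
The paper's own proof is a single line: ``This is a direct consequence of \cite[Theorem~3.4]{ryan2multiplicative}.'' So in the end you and the paper agree that the content lives in Ryan--Debbah; the question is whether your sketched independent argument is sound, and it is not.

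There are two concrete errors. First, step~(iv) asserts that $\boxdiag f_\lambda$ is a $\boxplus$-homomorphism because of ``distributivity of $\boxconv$ over $+$.'' But boxed convolution is \emph{not} additive: the coefficient $\coef{f}{n}{;\pi}=\prod_{V\in\pi}\coef{f}{\abs{V}}{}$ is a product over blocks, so $(f+g)\boxconv h \neq f\boxconv h + g\boxconv h$ in general (already for $f=g=\Delta$ and $h=z+z^2$ one gets $2z+4z^2$ on the left and $2z+2z^2$ on the right). Hence you cannot pass from $M[q(a+c)^*(a+c)q]=M[qa^*aq]\boxplus M[qc^*cq]$ to the deconvolved identity \eqref{align:ryan-moment} by applying $\boxdiag f_\lambda$. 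Second, and relatedly, step~(ii) as written is false: $qa^*aq$ and $qc^*cq$ are \emph{not} free in $(q\mf{A}q,\lambda^{-1}\phi)$, and $q(a+c)^*(a+c)q$ does not have the distribution of their free sum. (If it did, the $f_\lambda$-deconvolution in the statement would be superfluous, since $\boxdiag f_\lambda$ is not a $\boxplus$-morphism.) You partly acknowledge this in your second paragraph, but the outline in the first paragraph commits to the wrong picture.

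The correct mechanism --- and the reason the paper simply cites Ryan--Debbah --- is that the compressed singular-value laws combine via the \emph{rectangular} free additive convolution with ratio $\lambda$, and the map $M\mapsto M\boxdiag f_\lambda$ intertwines that rectangular convolution with the ordinary $\boxplus$. So the $f_\lambda$-deconvolution is not a post-processing step applied to an already-additive identity; it is precisely what produces the additivity. Your proposal never establishes this intertwining, and the distributivity shortcut you invoke in its place is invalid.
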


\begin{proof}
	This is a direct consequence of \cite[Theorem~3.4]{ryan2multiplicative}.
\end{proof}

\subsubsection{Free Poisson Distribution}\hfill

The formal power series $f_\lambda$ in Proposition~\ref{prop:ryan} is 
R-transform of a free Poisson distribution. We review on the free Poisson distribution.

\begin{defn}(Free Poisson Distribution)
	Let $\lambda >0$, $\alpha \in \R$.
	Then the \emph{free Poisson distribution} with rate $\lambda$ and jump size $\alpha$ is defined as the probability measure on $\R$ determined by 
	\begin{align}
	R[\nu_{\lambda, \alpha}]  = \lambda\sum_{n=1}^\infty  \alpha^n z^n.
	\end{align}    
\end{defn}

Usually free Poisson law  is defined as the limit law of free version of law of small numbers \cite[Definition~12.12]{nica2006lectures}.
The compatibility between our definition and usual definition is given by 
\cite[Proposition~12.11]{nica2006lectures}.
Note that $\nu_{\lambda, \alpha}$ is, in fact, a compactly supported probability measure. 
Note that
\begin{align}
f_\lambda = R[\nu_{ \lambda^{-1},  \lambda}].
\end{align}

\begin{lem}\label{lem:R[qaq]}
	Let $(\mf{A}, \phi)$ be a C$^*$-probability space,  $a \in \mf{A}$, and  $q \in \mf{A}$ be a non-zero projection free from $a$. Then it holds that
	\begin{align}
	R^{q\mf{A}q}[qaq](z) =  \lambda^{-1} R[a](\lambda z ),
	\end{align} 
	where $\lambda:= \phi(q)$.\end{lem}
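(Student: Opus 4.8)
The plan is to compute the moments of $qaq$ in the compressed C$^*$-probability space $(q\mf{A}q,\lambda^{-1}\phi)$ directly in terms of the free cumulants of $a$, deduce that the free cumulants of $qaq$ there are $\lambda^{m-1}$ times the free cumulants $\kappa_m(a)$ of $a$ in $(\mf{A},\phi)$, and then rewrite this as the claimed identity of $R$-transforms.

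First I would use $q=q^*=q^2$ and traciality of $\phi$ to reduce, for every $n\ge 1$,
\begin{align}
\phi\big[(qaq)^n\big]=\phi\big[q(aq)^n\big]=\phi\big[(aq)^n\big]=\phi\big[\,\underbrace{aqaq\cdots aq}_{n\text{ factors }aq}\,\big],
\end{align}
so that $(\lambda^{-1}\phi)\big((qaq)^n\big)=\lambda^{-1}\phi\big[(aq)^n\big]$. Next I would expand this alternating moment by the moment--cumulant formula over $\NC(2n)$. Since $(a,q)$ is free, every mixed free cumulant vanishes, so the only noncrossing partitions that contribute are of the form $\sigma\cup\rho$ with $\sigma\in\NC(n)$ living on the $n$ positions occupied by $a$ and $\rho\in\NC(n)$ on the $n$ positions occupied by $q$. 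With the interleaved order on these $2n$ positions --- exactly the order used to define the Kreweras complement in this paper --- the condition that $\sigma\cup\rho$ be noncrossing is precisely $\rho\le K(\sigma)$. Using $\phi(q^k)=\phi(q)=\lambda$ for all $k\ge 1$ and the moment--cumulant formula applied to $q$,
\begin{align}
\sum_{\rho\le K(\sigma)}\ \prod_{W\in\rho}\kappa_{|W|}(q)=\prod_{W\in K(\sigma)}\phi\big(q^{|W|}\big)=\lambda^{|K(\sigma)|},
\end{align}
so, abbreviating $\kappa_m:=\kappa_m(a)$,
\begin{align}
\phi\big[(aq)^n\big]=\sum_{\sigma\in\NC(n)}\Big(\prod_{V\in\sigma}\kappa_{|V|}\Big)\lambda^{|K(\sigma)|}.
\end{align}

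Now I would invoke the standard cardinality identity $|\sigma|+|K(\sigma)|=n+1$ on $\NC(n)$; since $n-|\sigma|=\sum_{V\in\sigma}(|V|-1)$ this turns $\lambda^{|K(\sigma)|}$ into $\lambda\prod_{V\in\sigma}\lambda^{|V|-1}$, whence
\begin{align}
(\lambda^{-1}\phi)\big((qaq)^n\big)=\sum_{\sigma\in\NC(n)}\ \prod_{V\in\sigma}\big(\lambda^{|V|-1}\kappa_{|V|}\big).
\end{align}
This is exactly the moment--cumulant relation in $(q\mf{A}q,\lambda^{-1}\phi)$ asserting that the free cumulants of $qaq$ are $\kappa^{q\mf{A}q}_m(qaq)=\lambda^{m-1}\kappa_m$; by uniqueness of free cumulants these are determined. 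Finally, since the coefficients of the $R$-transform of an element are its free cumulants (the compatibility recalled in the text around \eqref{align:r-compati}, now applied inside $(q\mf{A}q,\lambda^{-1}\phi)$),
\begin{align}
R^{q\mf{A}q}[qaq](z)=\sum_{m\ge 1}\lambda^{m-1}\kappa_m\,z^m=\lambda^{-1}\sum_{m\ge 1}\kappa_m(\lambda z)^m=\lambda^{-1}R[a](\lambda z),
\end{align}
which is the assertion.

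The step I expect to be the main obstacle is the middle one: correctly setting up the bipartite structure of $\NC(2n)$ forced by the alternating word $aqaq\cdots aq$ and recognizing that, once the partition $\sigma$ on the $a$-positions is fixed, the admissible partitions on the $q$-positions are exactly those $\rho\le K(\sigma)$. This is the same combinatorial fact that underlies the definition of the boxed convolution, so one may alternatively carry out the whole computation in the language of $\boxconv$, or simply cite the standard compression-by-a-free-projection formula. The remaining ingredients --- the reduction $\phi[(qaq)^n]=\phi[(aq)^n]$, the triviality $\phi(q^k)=\lambda$ for a projection, and the identity $|\sigma|+|K(\sigma)|=n+1$ --- are routine bookkeeping.
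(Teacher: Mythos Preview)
Your proof is correct and is essentially the same computation as the paper's, carried out at a slightly different level of abstraction. The paper first writes $M[qaq]=M[aq]=R[a]\boxconv R[q]\boxconv\fzeta=R[a]\boxconv(\lambda\fzeta)$ via Lemma~\ref{lem:R[ab]}, then expands the boxed convolution and uses $\#\pi+\#K(\pi)=n+1$; you instead inline the proof of Lemma~\ref{lem:R[ab]} by applying the moment--cumulant formula on $\NC(2n)$ to the alternating word $aqaq\cdots aq$, invoking vanishing of mixed cumulants, and summing over $\rho\le K(\sigma)$, arriving at the same sum and the same use of $|\sigma|+|K(\sigma)|=n+1$. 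Neither version has any real advantage over the other: yours is self-contained, the paper's is one line shorter once $\boxconv$ and Lemma~\ref{lem:R[ab]} are available.
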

This is well-known, but for the reader's convenience, we sketch the proof. 
\begin{proof}
	Note that $M^{q\mf{A}q}[qaq] = \lambda^{-1}M[qaq].$
	By the tracial condition and Lemma~\ref{lem:R[ab]}, 
	\begin{align}
	M[qaq]  = M[aq] =  R[aq] \boxconv \fzeta = R[a] \boxconv R[q] \boxconv \fzeta%
	= R[a] \boxconv M[q] =  R[a] \boxconv (\lambda \fzeta).
	\end{align}
	By definition of the boxed convolution, we have
	\begin{align}
	R[a] \boxconv (\lambda \fzeta) (z)  =  \sum_{n=1}^\infty \sum_{\pi \in \NC(n)} \coef{R[a]}{n}{;\pi}  \lambda^{\#K(\pi)}  z^n.
	\end{align}
	Since $\#\pi + \#K(\pi) = n+1$, this is equal to
	\begin{align}
	\lambda \sum_{n=1}^\infty \sum_{\pi \in \NC(n)} \coef{\frac{1}{\lambda}R[a]}{n}{;\pi}  \lambda  z^n = \lambda[  (\lambda^{-1}R[a](\lambda \ \cdot \ ) ) \boxconv  \fzeta] (z).
	\end{align}
	Thus
	\begin{align}
	M^{q\mf{A}q}[qaq] &= (\lambda^{-1}R[a]( \lambda \ \cdot \ ) ) \boxconv  \fzeta, \\
	R^{q\mf{A}q}[qaq] &= \lambda^{-1}R[a]( \lambda \ \cdot \ ).
	\end{align}
	
\end{proof}

\begin{example}
    Let $q,c \in \mf{A}$ and  $q$ be a nonzero-projection.
    Assume that $(\{q\}, \{c,c^*\} )$  is free pair in $(\mf{A}, \tau)$ and $c$ is a standard circular element.
	Then by Lemma~\ref{lem:R[qaq]}, 
	\begin{align}
	R^{q\mf{A}q}[qc^*cq](z) = \lambda^{-1}R[c^*c]( \lambda z) = 
	\lambda^{-1}\sum_{n=1}^{\infty}\lambda^n z^n = 
	R [\nu_{ \lambda^{-1},  \lambda} ](z) = f_\lambda(z).
	\end{align}    
	
\end{example}

\subsubsection{Second Lemma}\hfill

In this section, we convert the model to an operator of the form $qaq$ where $q$ is a projection.
Let $(\mf{A}, \phi)$ be a C$^*$-probability space.
Let $p, d \in \N$ with $p \geq d$ and write $\lambda = d/p$.
In this section and in next one, we denote by $C^{p,d}$ be a $p \times d$ matrix of $*$-free circular elements with 
\begin{align}
\phi[ (C^{p,d}_{ij} )^* C^{p,d}_{ij} ]= \frac{1}{d}.
\end{align}
Recall that
\begin{align}
W^\Box_\SPN(A,\sigma ) &= \left( A + \sigma C^{p,d} \right)^* \left( A + \sigma C^{p,d} \right).
\end{align}
Now we identify $C^{p,d}$ with $d \times d$ upper-left corner of $C^{p,p}$
with a normalization as the following:
\begin{align}
C^{p,p}_{ij} =  \sqrt{\lambda} C^{p,d}_{ij}, \  \forall i \in \{1, 2, \dots,  p\}, \ \forall j \in \{1,2,\dots, d \}.
\end{align}
Recall that a family $\{ C^{p,p}_{ij} \mid \text{  $1 \leq i,j \leq p$ } \}$  is a  $*$-free family of circular elements such as 
\begin{align}
\phi[  (C^{p,p}_{ij})^*C^{p,p}_{ij})]  =  \frac{1}{p}.
\end{align}
We write
\begin{align}
\mf{C}:= M_p(\mf{A}), \ \tau := \tr_p \otimes \phi.
\end{align}
Then $C^{p,p}$ is a circular element in $(\mf{C}, \tau)$, and it is standard, that is, 
\begin{align}
\coef{ R_{ (C^{p,p})^*C^{p,p} } }{n}{} =  1.
\end{align}
We define a projection  $\Pi \in M_p(\C) \subset \mf{C}$ as 
\begin{align}
\Pi_{ij} =  \begin{cases}
1, & \mathrm{ if \ } i=j\leq d,\\
0, & \mathrm{ otherwise.}
\end{cases}
\end{align}
One has $\tau(\Pi) = \lambda$.
For a $p \times d$-matrix $A$, let us denote by $\tilde{A}$ be the $p \times p$-square matrix obtained by adding zeros to $A$;
\begin{align}
\tilde{A} := \begin{bmatrix}
A &  O_{p,d}
\end{bmatrix}.
\end{align}
Now by definition, we have
\begin{align}
\Pi \left(  \tilde{A} +  \frac{\sigma}{ \sqrt{\lambda} }C^{p,p}  \right)^*   \left(  \tilde{A} +   \frac{ \sigma }{\sqrt{ \lambda} } C^{p,p} \right) \Pi %
&=  \begin{bmatrix}
\left( A + \sigma C^{p,d} \right)^* \left( A + \sigma C^{p,d} \right) & O_{d, p-d } \\
O_{p-d,d} & O_{p-d, p-d} \\
\end{bmatrix}.
\end{align}
Therefore,  for any $m \in \N$, 
\begin{align}
\frac{1}{d}\Tr_d \otimes \phi \left[  W^\Box_\SPN(A, \sigma)^m  \right] %
&=  \frac{1}{\lambda} \frac{1}{p} \Tr_p \otimes \phi \left\{ \left[ \Pi \left(  \tilde{A} + \frac{\sigma}{ \sqrt{\lambda} } C^{p,p}  \right)^*   \left(  \tilde{A} +   \frac{\sigma}{ \sqrt{\lambda} } C^{p,p} \right) \Pi \right]^m \right\},\\
\tr_d \otimes \phi \left[  W^\Box_\SPN(A, \sigma)^m  \right] %
&=  \frac{1}{\tr_p \otimes \phi(\Pi) }  \tr_p \otimes \phi \left\{ \left[ \Pi \left(  \tilde{A} + \frac{\sigma}{ \sqrt{\lambda} } C^{p,p}  \right)^*   \left(  \tilde{A} +   \frac{\sigma}{ \sqrt{\lambda} } C^{p,p} \right) \Pi \right]^m \right\}.
\end{align}
Equivalently, we have
\begin{align}\label{align:reduced-moments}
M[ W^\Box_\SPN (A,\sigma )]  = M^{\Pi \mf{C}\Pi} \left[ \Pi \left(  \tilde{A} + \frac{\sigma}{ \sqrt{\lambda} } C^{p,p}  \right)^*   \left(  \tilde{A} + \frac{\sigma}{ \sqrt{\lambda} } C^{p,p} \right) \Pi \right].
\end{align}
Recall that 
\begin{align}
M^{\Pi \mf{C}\Pi}[ \Pi X \Pi ](z)  =  \sum_{n=1}^\infty \frac{1}{\tau(\Pi)} \tau[ (\Pi X \Pi)^n ] z^n.
\end{align}

\begin{lem}\label{lem:scalar-deconv}
	Let $\alpha \in \R$. Then 
	\begin{align}
	M^{\Pi \mf{C}\Pi}[ \alpha \Pi   (C^{p,p} )^*  C^{p,p}   \Pi] ( z) \boxdiag M[\nu_{ \lambda^{-1}, \lambda}] = M[\delta_{\alpha}],
	\end{align}
	where $\delta_{\alpha}$ is the delta measure on $\R$ whose support is $\{\alpha\}$.
\end{lem}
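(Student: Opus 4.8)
The plan is to reduce the identity to the scalar free multiplicative deconvolution \eqref{align:scalar-deconv} (equivalently, to the remark immediately following it). The heart of the matter is to identify the free Poisson factor, namely to show
\[
M^{\Pi\mf{C}\Pi}\!\left[\Pi(C^{p,p})^*C^{p,p}\Pi\right]=M[\nu_{\lambda^{-1},\lambda}].
\]
For this I apply Lemma~\ref{lem:R[qaq]} inside the C$^*$-probability space $(\mf{C},\tau)=(M_p(\mf{A}),\tr_p\otimes\phi)$, with the roles of $(\mf{A},\phi)$, $a$, and $q$ played by $(\mf{C},\tau)$, $(C^{p,p})^*C^{p,p}$, and $\Pi$ (so that $\lambda=\tau(\Pi)=d/p$). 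Granting -- as discussed below -- that $\Pi$ is free from $(C^{p,p})^*C^{p,p}$ in $(\mf{C},\tau)$, and using that $C^{p,p}$ is a \emph{standard} circular element of $(\mf{C},\tau)$, i.e.\ $R[(C^{p,p})^*C^{p,p}]=\fzeta$, Lemma~\ref{lem:R[qaq]} yields
\[
R^{\Pi\mf{C}\Pi}\!\left[\Pi(C^{p,p})^*C^{p,p}\Pi\right](z)=\lambda^{-1}\fzeta(\lambda z)=\sum_{n=1}^{\infty}\lambda^{n-1}z^n=f_\lambda(z)=R[\nu_{\lambda^{-1},\lambda}](z)
\]
(this is exactly the computation in the Example following Lemma~\ref{lem:R[qaq]}). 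Since $f\mapsto R_f=f\boxconv\fzeta^{-1}$ is a bijection of $\fpower$ onto itself (with inverse $g\mapsto g\boxconv\fzeta$), equality of $R$-transforms forces equality of the moment series, which is the displayed identity.

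To finish, set $a:=\Pi(C^{p,p})^*C^{p,p}\Pi$, regarded as an element of the C$^*$-probability space $(\Pi\mf{C}\Pi,\tau(\Pi)^{-1}\tau)$. Since $\alpha\in\R$, the scalar $\alpha$ (times the unit $\Pi$ of $\Pi\mf{C}\Pi$) is free from $a$, so Lemma~\ref{lem:R[ab]} gives $R^{\Pi\mf{C}\Pi}[\alpha a]=R[\delta_\alpha]\boxconv R^{\Pi\mf{C}\Pi}[a]$ with $R[\delta_\alpha]=\alpha\Delta$. Because $R^{\Pi\mf{C}\Pi}[a]=f_\lambda$ has nonzero linear coefficient, $M^{\Pi\mf{C}\Pi}[a]\in\fpower^{\times}$, so the deconvolution $M^{\Pi\mf{C}\Pi}[\alpha a]\boxdiag M^{\Pi\mf{C}\Pi}[a]$ is defined and, by the defining property of $\boxdiag$, equals $M[\delta_\alpha]$. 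Substituting $M^{\Pi\mf{C}\Pi}[a]=M[\nu_{\lambda^{-1},\lambda}]$ from the previous paragraph and noting $M^{\Pi\mf{C}\Pi}[\alpha a]=M^{\Pi\mf{C}\Pi}[\alpha\,\Pi(C^{p,p})^*C^{p,p}\Pi]$, this is precisely the asserted identity. (One may instead quote \eqref{align:scalar-deconv}, or the remark after it, verbatim in $(\Pi\mf{C}\Pi,\tau(\Pi)^{-1}\tau)$.)

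The only nontrivial ingredient is the freeness of $\Pi$ and $(C^{p,p})^*C^{p,p}$ in $(\mf{C},\tau)$, and I expect this to be essentially the only obstacle. It follows from the standard fact that a $p\times p$ matrix whose entries form a $*$-free family of circular elements of common variance $1/p$ is $*$-free, in $(M_p(\mf{A}),\tr_p\otimes\phi)$, from the unital subalgebra $M_p(\C)$ of scalar matrices; since $\Pi\in M_p(\C)$, in particular $\Pi$ is free from the $*$-subalgebra generated by $C^{p,p}$, hence from $(C^{p,p})^*C^{p,p}$. This is classical (see \cite{Mingo2017free}) and is of the same nature as the operator-valued freeness statement invoked in the proof of Corollary~\ref{cor:a+s}.
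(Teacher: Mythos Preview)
Your proof is correct and follows essentially the same route as the paper's: both arguments first establish $R^{\Pi\mf{C}\Pi}[\Pi(C^{p,p})^*C^{p,p}\Pi]=f_\lambda=R[\nu_{\lambda^{-1},\lambda}]$ via Lemma~\ref{lem:R[qaq]} (using freeness of $\Pi$ from $C^{p,p}$, which the paper justifies simply by noting that the entries of $\Pi$ are scalars), and then conclude by the scaling identity \eqref{align:scalar-deconv}. Your final step via Lemma~\ref{lem:R[ab]} is just the ``remark after \eqref{align:scalar-deconv}'' version of the same computation, which you yourself note as an alternative.
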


\begin{proof}
	Now  $\{ C^{p,p} \}$ and $\{ \tilde{A}, \Pi \}$ is $*$-free in $(\mf{C}, \tau)$, since the entries of $A$  and $\Pi$ are scalar.
	By Lemma~\ref{lem:R[qaq]},
	\begin{align}
	R^{\Pi \mf{C}\Pi}[ \Pi   (C^{p,p} )^*  C^{p,p}   \Pi ] ( z)&=%
	\lambda^{-1}R[   (C^{p,p} )^*  C^{p,p} ] (\lambda z)\\
	&=%
	\lambda^{-1} \sum_{n=1}^\infty  (\lambda z)^{n}\\
	&=
	R[\nu_{ \lambda^{-1}, \lambda}].
	\end{align}
	Hence by \eqref{align:scalar-deconv}, 
	\begin{align}
    M^{\Pi \mf{C}\Pi}[ \alpha \Pi   (C^{p,p} )^*  C^{p,p}   \Pi] ( z) \boxdiag M[\nu_{ \lambda^{-1}, \lambda}] = 	M[\nu_{ \lambda^{-1},  \lambda}]( \alpha \ \cdot \ ) \boxdiag 	M[\nu_{ \lambda^{-1} \lambda}] = M[\delta_\alpha].
    \end{align}

\end{proof}

\begin{cor}\label{cor:deconv-spn}
	Let $p,d \in\N$, $A \in M_{p,d}(\C)$, $\sigma \in \R$.
	Assume that $p \geq d$ and set $\lambda := d/p$.
	Then
	\begin{align}
	M[ W^\Box_\SPN( A, \sigma ) ] \boxdiag f_\lambda   =  \left(   M[ A^*A]  \boxdiag f_\lambda \right) \boxplus M[\delta_{\sigma^2/\lambda}].
	\end{align}
	
\end{cor}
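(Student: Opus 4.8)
The plan is to feed the corner reduction \eqref{align:reduced-moments} into Ryan's deconvolution identity (Proposition~\ref{prop:ryan}) and then recognize each of the three moment series it produces. Note first that $p\ge d$ is used precisely to make $\tilde A=[\,A\ \ O_{p,p-d}\,]$ a genuine $p\times p$ matrix and $\tau(\Pi)=d/p=\lambda\le1$. By \eqref{align:reduced-moments},
\begin{align}
M[W^\Box_\SPN(A,\sigma)] = M^{\Pi\mf{C}\Pi}\Bigl[\,\Pi\,\bigl(\tilde A + \tfrac{\sigma}{\sqrt\lambda}C^{p,p}\bigr)^{*}\bigl(\tilde A + \tfrac{\sigma}{\sqrt\lambda}C^{p,p}\bigr)\,\Pi\,\Bigr],
\end{align}
so the left-hand side of the asserted identity equals $M^{\Pi\mf{C}\Pi}[\cdots]\boxdiag f_\lambda$, and I would evaluate it with Proposition~\ref{prop:ryan} inside $(\mf{C},\tau)=(M_p(\mf{A}),\tr_p\otimes\phi)$, taking $a:=\tilde A$, $c:=\tfrac{\sigma}{\sqrt\lambda}C^{p,p}$, $q:=\Pi$. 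The hypotheses are satisfied: $C^{p,p}$ is a standard circular element of $(\mf{C},\tau)$, so $c$ is circular; $\Pi$ is a projection with $\tau(\Pi)=\lambda$; and $(\{c,c^*\},\{a,q\})$ is a free pair because the entries of $\tilde A$ and of $\Pi$ are scalars while those of $C^{p,p}$ form a $*$-free family of circular elements. (The normalization $C^{p,p}_{ij}=\sqrt\lambda\,C^{p,d}_{ij}$ for $j\le d$ is exactly what makes $c$ restrict to $\sigma C^{p,d}$ on the relevant block.) Proposition~\ref{prop:ryan} then yields
\begin{align}
M[W^\Box_\SPN(A,\sigma)]\boxdiag f_\lambda = \bigl(M^{\Pi\mf{C}\Pi}[\Pi\tilde A^*\tilde A\,\Pi]\boxdiag f_\lambda\bigr)\ \boxplus\ \bigl(M^{\Pi\mf{C}\Pi}[\Pi c^*c\,\Pi]\boxdiag f_\lambda\bigr).
\end{align}

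It then remains to identify the two summands. The matrix $\tilde A^*\tilde A$ has $A^*A$ in its top-left $d\times d$ corner and $0$ elsewhere, so $(\Pi\tilde A^*\tilde A\Pi)^n$ has $(A^*A)^n$ there and $0$ elsewhere; dividing by $\tau(\Pi)=d/p$ gives $M^{\Pi\mf{C}\Pi}[\Pi\tilde A^*\tilde A\Pi](z)=\sum_{n\ge1}\tfrac1d\Tr_d\bigl((A^*A)^n\bigr)z^n=M[A^*A](z)$, so the first summand is $M[A^*A]\boxdiag f_\lambda$. Since $c^*c=\tfrac{\sigma^2}{\lambda}(C^{p,p})^*C^{p,p}$, Lemma~\ref{lem:scalar-deconv} with $\alpha=\sigma^2/\lambda$ gives $M^{\Pi\mf{C}\Pi}[\Pi c^*c\,\Pi]\boxdiag f_\lambda=M[\delta_{\sigma^2/\lambda}]$ (here $\boxdiag f_\lambda$ means deconvolution against $\nu_{\lambda^{-1},\lambda}$, whose $R$-transform is $f_\lambda$). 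Substituting these two identifications back produces exactly $M[W^\Box_\SPN(A,\sigma)]\boxdiag f_\lambda=(M[A^*A]\boxdiag f_\lambda)\boxplus M[\delta_{\sigma^2/\lambda}]$.

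The step needing genuine care is the invocation of Proposition~\ref{prop:ryan}: one must verify the freeness of $(\{c,c^*\},\{\tilde A,\Pi\})$ (which reduces to $C^{p,p}$ being a matrix of $*$-free circular elements over the scalar matrices), and, since Proposition~\ref{prop:ryan} is stated for self-adjoint $a$ whereas $\tilde A$ is generally not self-adjoint, one either appeals to \cite[Theorem~3.4]{ryan2multiplicative} directly (which permits an arbitrary rectangular signal matrix) or observes that the distribution of $q(a+c)^*(a+c)q$ depends on $a$ only through $a^*a$ --- by bi-unitary invariance of the circular element $c$ relative to the subalgebra from which it is free --- and replaces $\tilde A$ by $\sqrt{\tilde A^*\tilde A}$, which alters none of the series above. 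Everything else is routine bookkeeping with $\boxconv$, $\boxplus$, and $\boxdiag$.
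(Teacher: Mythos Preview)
Your proof is correct and follows the same route as the paper's: feed \eqref{align:reduced-moments} into Proposition~\ref{prop:ryan} with $(a,c,q)=(\tilde A,\,(\sigma/\sqrt\lambda)C^{p,p},\,\Pi)$, then identify the two summands via the block structure of $\tilde A^*\tilde A$ and Lemma~\ref{lem:scalar-deconv}. You are in fact more careful than the paper in flagging that Proposition~\ref{prop:ryan} is stated with $a^*=a$ while $\tilde A$ need not be self-adjoint, and in offering workarounds; the paper glosses over this point and implicitly relies on the original \cite[Theorem~3.4]{ryan2multiplicative}, which does cover rectangular signals.
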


\begin{proof}
	By  \eqref{align:reduced-moments} and  Proposition~\ref{align:ryan-moment}, the left-hand side is equal to 
	\begin{align}
	\left( M^{\Pi \mf{C}\Pi} [ \Pi \tilde{A}^* \tilde{A} \Pi ]  \boxdiag f_\lambda \right) \boxplus  \left( M^{\Pi \mf{C}\Pi}[ \frac{\sigma^2}{\lambda} \Pi (C^{p,p})^* C^{p,p} \Pi ] \boxdiag f_\lambda \right).
	\end{align}
	Now 
	\begin{align}
	M^{\Pi \mf{C}\Pi} [ \Pi \tilde{A}^* \tilde{A} \Pi ] = \frac{1}{\tau(\Pi)} M[ \tilde{A}^* \tilde{A} ] = \frac{1}{\tau(\Pi)}   \frac{d}{p}M[A^*A] = M[A^*A]. 
	\end{align}
	By Lemma~\ref{lem:scalar-deconv}, it holds that 
	\begin{align}
	M^{\Pi \mf{C}\Pi}[ \frac{\sigma^2}{\lambda} \Pi (C^{p,p})^* C^{p,p} \Pi ] \boxdiag f_\lambda =  M[\delta_{ \sigma^2/\lambda } ].
	\end{align}
	Hence the assertion holds.
\end{proof}

\begin{lem}\label{lem:moment_delta}
	Assume that $\alpha,\beta \in \R$, and $f, g \in \fpower$ satisfy 
	\begin{align}\label{align:assume}
	f \boxplus M[\delta_{\alpha}] = g \boxplus M[\delta_{\beta}].
	\end{align}
	Then 
	\begin{align}\label{align:subtract-scalar}
	f \boxplus  M[\delta_{\alpha - \beta } ] = g.
	\end{align}
\end{lem}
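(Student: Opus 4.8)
The plan is to push the hypothesis through the $R$-transform, under which $\boxplus$ becomes ordinary addition of formal power series and a Dirac mass becomes a purely linear term, and then to use that the $R$-transform is injective on $\fpower$.

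First I would record the two algebraic facts that do all the work. By the definition of $\boxplus$ we have $R_{h_1 \boxplus h_2} = R_{h_1} + R_{h_2}$ for all $h_1, h_2 \in \fpower$, and by the computation in the Example, $R_{M[\delta_\gamma]} = R[\delta_\gamma] = \gamma\Delta$ for every $\gamma \in \R$ (the Example is stated for arbitrary real scalars, so this is legitimate even when $\alpha - \beta \le 0$, and $\delta_{\alpha-\beta}$ is still a genuine compactly supported probability measure). Applying $R$ to both sides of \eqref{align:assume} therefore yields, in $\fpower$, the identity $R_f + \alpha\Delta = R_g + \beta\Delta$, hence $R_f + (\alpha-\beta)\Delta = R_g$. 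Rewriting the left-hand side as $R_f + R_{M[\delta_{\alpha-\beta}]} = R_{f \boxplus M[\delta_{\alpha-\beta}]}$ (note $f \boxplus M[\delta_{\alpha-\beta}]$ is a well-defined element of $\fpower$, since $\boxplus$ is defined on all of $\fpower$), we arrive at $R_{f \boxplus M[\delta_{\alpha-\beta}]} = R_g$.

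To finish I would invoke injectivity of $R \colon \fpower \to \fpower$. Since $\fzeta \in \fpower^\times$, the map $h \mapsto R_h = h \boxconv \fzeta^{-1}$ is a bijection of $\fpower$ onto itself, with inverse $h \mapsto h \boxconv \fzeta$ (using that $\boxconv$ is associative with unit $\Delta$). Consequently $R_{f \boxplus M[\delta_{\alpha-\beta}]} = R_g$ forces $f \boxplus M[\delta_{\alpha-\beta}] = g$, which is exactly \eqref{align:subtract-scalar}.

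I do not expect a genuine obstacle here: the statement is essentially the translation of \eqref{align:assume} into the linearizing coordinate $R_{(\cdot)}$, and the only points needing care are that $R[\delta_\gamma] = \gamma\Delta$ and $R_{h_1\boxplus h_2}=R_{h_1}+R_{h_2}$ are being used precisely as established, and that injectivity of $R$ is available because $\fzeta$ is $\boxconv$-invertible.
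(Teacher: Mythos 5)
Your proof is correct and follows essentially the same route as the paper: apply the $R$-transform to \eqref{align:assume} to linearize $\boxplus$ and turn $M[\delta_\gamma]$ into $\gamma\Delta$, cancel, and then invert via $\boxconv\,\fzeta$. Your explicit remarks that $R[\delta_{\alpha-\beta}]=(\alpha-\beta)\Delta$ holds for any real scalar and that $h\mapsto h\boxconv\fzeta^{-1}$ is a bijection are just slightly more careful versions of steps the paper leaves implicit.
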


\begin{proof}
	Apply $\boxdiag \fzeta$ to both hand side of \eqref{align:assume}, then
	\begin{align}
	R_f(z) + \alpha z  &=  R_g(z) +  \beta z,\\
	R_f(z) + (\alpha - \beta)z  &=  R_g(z).
	\end{align}
	Applying $\boxconv \fzeta$ to both hand side, we have \eqref{align:subtract-scalar}.
\end{proof}

Now we prove the second key lemma.
\begin{lem}\label{lem:key-comb}
	Let $p,d \in\N$, $\sigma, \rho \in \R$, and,  $A$ and $B\in M_{p,d}(\C)$.
	Assume that $\sigma^2\geq \rho^2$  and  
	\begin{align}
	\muSPNB{A}{\sigma} = \muSPNB{B}{\rho}.
	\end{align}
	Then 
	\begin{align}\label{align:reduced}
	\muSPNB{A}{\sqrt{\sigma^2 -\rho^2}} = \muSPNB{B}{0}.
	\end{align}
	
\end{lem}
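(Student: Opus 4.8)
The plan is to invert the free multiplicative deconvolution identity of Corollary~\ref{cor:deconv-spn} and feed the result into the scalar subtraction Lemma~\ref{lem:moment_delta}. Set $\lambda := d/p$. Since $\muSPNB{A}{\sigma}$ and $\muSPNB{B}{\rho}$ are compactly supported probability measures, the hypothesis $\muSPNB{A}{\sigma} = \muSPNB{B}{\rho}$ is equivalent to the equality of the associated moment power series, $M[W^\Box_\SPN(A,\sigma)] = M[W^\Box_\SPN(B,\rho)]$. Applying $\boxdiag f_\lambda$ to both sides and invoking Corollary~\ref{cor:deconv-spn} (legitimate because $p \geq d$) rewrites this as
\begin{align}
\left( M[A^*A] \boxdiag f_\lambda \right) \boxplus M[\delta_{\sigma^2/\lambda}]
= \left( M[B^*B] \boxdiag f_\lambda \right) \boxplus M[\delta_{\rho^2/\lambda}].
\end{align}

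Next I would apply Lemma~\ref{lem:moment_delta} with $f := M[A^*A]\boxdiag f_\lambda$, $g := M[B^*B]\boxdiag f_\lambda$, $\alpha := \sigma^2/\lambda$ and $\beta := \rho^2/\lambda$, obtaining
\begin{align}
\left( M[A^*A]\boxdiag f_\lambda \right) \boxplus M[\delta_{(\sigma^2-\rho^2)/\lambda}] = M[B^*B] \boxdiag f_\lambda .
\end{align}
Because $\sigma^2 \geq \rho^2$ we may write $(\sigma^2-\rho^2)/\lambda = (\sqrt{\sigma^2-\rho^2})^2/\lambda$ with $\sqrt{\sigma^2-\rho^2} \in \R$, so Corollary~\ref{cor:deconv-spn}, applied with $\sigma$ replaced by $\sqrt{\sigma^2-\rho^2}$, identifies the left-hand side with $M[W^\Box_\SPN(A,\sqrt{\sigma^2-\rho^2})]\boxdiag f_\lambda$. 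Likewise, applying Corollary~\ref{cor:deconv-spn} with $A$ replaced by $B$ and $\sigma = 0$, and using that $M[\delta_0] = 0$ is neutral for $\boxplus$, identifies the right-hand side with $M[W^\Box_\SPN(B,0)]\boxdiag f_\lambda$. Hence $M[W^\Box_\SPN(A,\sqrt{\sigma^2-\rho^2})]\boxdiag f_\lambda = M[W^\Box_\SPN(B,0)]\boxdiag f_\lambda$.

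Finally I would cancel the deconvolution by $f_\lambda$. Since $\coef{f_\lambda}{1}{} = 1 \neq 0$ we have $f_\lambda \in \fpower^\times$, so $R_{f_\lambda}$ is $\boxconv$-invertible and the map $h \mapsto h \boxdiag f_\lambda = (R_h \boxconv R_{f_\lambda}^{-1}) \boxconv \fzeta$ is a composition of bijections of $\fpower$, in particular injective. This yields $M[W^\Box_\SPN(A,\sqrt{\sigma^2-\rho^2})] = M[W^\Box_\SPN(B,0)]$, and since both sides are moment power series of compactly supported probability measures the moment problem is determinate, so $\muSPNB{A}{\sqrt{\sigma^2-\rho^2}} = \muSPNB{B}{0}$, i.e.\ \eqref{align:reduced}. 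I expect the one point needing care to be this last step: verifying that deconvolution by $f_\lambda$ is injective (equivalently, that every operation in sight is a bijection of $\fpower$) and that $M[\delta_0]$ is the $\boxplus$-unit; the remaining steps are straightforward bookkeeping with identities already established in the excerpt.
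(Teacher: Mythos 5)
Your proposal is correct and follows essentially the same route as the paper's proof: apply Corollary~\ref{cor:deconv-spn} to both sides, use Lemma~\ref{lem:moment_delta} to absorb the scalar, apply Corollary~\ref{cor:deconv-spn} again (with $\sqrt{\sigma^2-\rho^2}$ and with $0$), cancel the deconvolution by $f_\lambda$ via invertibility of $R_{f_\lambda}$, and conclude by moment determinacy of compactly supported measures. Your explicit remarks that $M[\delta_0]$ is the $\boxplus$-unit and that $h\mapsto h\boxdiag f_\lambda$ is injective are just slightly more detailed phrasings of the paper's step of applying $\boxconv R[f_\lambda]\boxconv \fzeta$ to both sides.
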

\begin{proof}
	By Corollary~\ref{cor:deconv-spn} and the assumption, we have
	\begin{align}
	( M_{A^*A} \boxdiag f_\lambda ) \boxplus M[\delta_{\sigma^2/\lambda}]  = %
	( M_{B^*B} \boxdiag f_\lambda ) \boxplus M[\delta_{\rho^2 /\lambda}].
	\end{align}
	Thus by Lemma~\ref{lem:moment_delta}, it holds that
	\begin{align}
	( M[A^*A] \boxdiag f_\lambda ) \boxplus M[\delta_{(\sigma^2-\rho^2)/\lambda}]  = %
	M[B^*B] \boxdiag f_\lambda. 
	\end{align}
	By using Corollary~\ref{cor:deconv-spn} again, we have 
	\begin{align}
	M[\muSPNB{A}{ \sqrt{\sigma^2 - \rho^2}} ] \boxdiag f_\lambda =    M[\muSPNB{B}{0} ] \boxdiag f_\lambda.
	\end{align}
    Equivalently,
    \begin{align}
    R[\muSPNB{A}{ \sqrt{\sigma^2 - \rho^2}} ]  \boxconv R[f_\lambda]^{-1} =    R[\muSPNB{B}{0} ] \boxconv R[f_\lambda]^{-1}.
    \end{align}
    Applying $\boxconv R[f_\lambda] \boxconv \fzeta$ to the both hand sides, we have
    \begin{align}
    M[\muSPNB{A}{ \sqrt{\sigma^2 - \rho^2}} ]  =    M[\muSPNB{B}{0} ].     
    \end{align}
    Since any compactly supported probability measure is determined by its moments, the assertion holds.
\end{proof}

\subsubsection{Proof of Identifiability}\label{sssec:proof-of-iden}

\begin{proof}[proof of Theorem~\ref{thm:identifiability-spn}]
	Without loss of generality, we may assume that $\sigma^2 \geq \rho^2$.
	Let $\muSPNB{A}{\sigma} = \muSPNB{B}{\rho}$.  
    First, by Lemma~\ref{lem:key-comb}, we have 
	\begin{align}
	\mu^\Box(A,\sqrt{\sigma^2 - \rho^2}) =\mu^\Box(B,0).
	\end{align}
	Second, Lemma~\ref{lem:key-analy} implies $\sqrt{\sigma^2 - \rho^2} = 0$. Then $\mu_{A^*A} = \mu_{B^*B}$, which completes the proof.
\end{proof}

\section{Acknowledgement}
We would like to thank Hiroaki Yoshida for discussions.
We appreciate Yuichi Ike's valuable comments on our manuscript.


\end{document}